\renewcommand{\phi}{\varphi}
\newcommand{\cpl}{C_{\msf{PL}}}
\newcommand{\cals}{C_{\msf{bLS}}}
\newcommand{\cls}{C_{\msf{LS}}}
\renewcommand{\cap}{C_{\msf{bP}}}
\newcommand{\cp}{C_{\msf{P}}}
\newcommand{\bal}{ballistic}
\newcommand{\Bal}{Ballistic}
      \theoremstyle{plain}
\begin{document}
\title{The ballistic limit of the log-Sobolev constant
equals the Polyak--\L{}ojasiewicz constant}
\author{Sinho Chewi\thanks{Department of Statistics
and Data Science, Yale University, \texttt{sinho.chewi@yale.edu}.}
\and
Austin J.\ Stromme\thanks{Department of Statistics, ENSAE/CREST, \texttt{austin.stromme@ensae.fr}.}
}

\maketitle

\begin{abstract}
The Polyak--\L{}ojasiewicz (PL) constant of a function $f \colon \R^d\to\R$ characterizes the best exponential rate of convergence of gradient flow for $f$, uniformly over initializations.
Meanwhile, in the theory of Markov diffusions, the log-Sobolev (LS) constant plays an analogous role, governing the exponential rate of convergence for the Langevin dynamics from arbitrary initialization in the Kullback--Leibler divergence. We establish a new connection between optimization and sampling
by showing that the low temperature limit $\lim_{t\to 0^+} t^{-1} \cls(\mu_t)$ of the LS constant of $\mu_t \propto \exp(-f/t)$ is exactly the PL constant of $f$, under mild assumptions.
In contrast, we show that the corresponding limit for the Poincar\'e constant is the inverse of the smallest eigenvalue of $\nabla^2 f$ at the minimizer.
\end{abstract}

\section{Introduction}

Let $f\colon \R^d \to \R$ denote a fixed twice continuously
differentiable function and, for each $t > 0$,
consider the probability
measure $\mu_t \deq  \frac{1}{Z_t}\, e^{-f/t}$
for an appropriate normalizing constant $Z_t$.
In this paper, we study the asymptotic behavior of the log-Sobolev
constant of $\mu_t$ in the low temperature regime $t \to 0^+$.

Recall that
for a probability measure $\nu$ on $\R^d$ such
that $\nu \ll \mu_t$ and $\bigl(\frac{\ud \nu}{\ud \mu_t}\bigr)^{1/2}$ is compactly supported and smooth, we define the 
Kullback--Leibler (KL) divergence, and Fisher information of $\nu$ relative to $\mu_t$, as
$$
\KL{\nu}{\mu_t}  \deq  \int  \log \frac{\ud \nu}{\ud \mu_t}\, \ud \nu\,, \quad \quad 
\FI{\nu}{\mu_t}  \deq  \int \bigl\|\nabla \log \frac{\D\nu}{\D\mu_t}\bigr\|^2\, \ud \nu\,,
$$ respectively.
The log-Sobolev constant $\cls(\mu_t)$ is then defined to be the smallest constant
$C > 0$ such that,
for all compactly supported probability measures $\nu$ with smooth density, %
it holds that
\begin{equation}\label{eqn:lsi_defn}
\KL{\nu}{\mu_t} \leqslant \frac{C}{2}\FI{\nu}{\mu_t}\,.
\end{equation}
The log-Sobolev constant is the fundamental quantity
which governs the exponential rate of convergence for the Langevin dynamics in KL divergence~\cite{bakry2014analysis}.

In the low temperature limit $t\to 0^+$, the behavior of the log-Sobolev constant $\cls(\mu_t)$ is intimately
related to the optimization landscape of $f$.
Indeed, the Langevin dynamics ${(X^t_s)}_{s\ge 0}$ for $\mu_t$ can be written, up to a rescaling of time, as
\begin{equation}\label{eqn:re-scaled_Langevin}
\ud X_s^t = - \nabla f(X_s^t)\, \ud s + \sqrt{2t}\, \ud B_s\,.
\end{equation}
In particular, as $t\to 0^+$, these dynamics formally converge to the gradient flow of $f$. 
Is is therefore intuitive that the convergence rate for the Langevin dynamics for $\mu_t$ should reflect the convergence rate for the gradient flow for $f$.
For example, if $f$ has multiple local
minima, then the Langevin dynamics converge
exponentially slowly, and the classical Eyring--Kramers
formula gives precise asymptotics for this blow-up~\cite{eyring1935activated,kramers1940brownian},
which were rigorously established
in~\cite{bovier2004metastability}. These results
were extended to characterize the exponential blow-up of the
log-Sobolev constant $\cls(\mu_t)$ in~\cite{menz2014poincare}.
In the case where $f$ has a ``benign landscape",
meaning that it has no spurious local minima and constant
curvature around its critical points,
it is known in various settings that $\cls(\mu_t)$ remains of constant
order~\cite{menz2014poincare,kinoshita2022improved, li2023riemannian}.

But in contrast to these cases, if $f$ has an ideal optimization
landscape, namely $f$ is $\alpha$-strongly convex, then $\cls(\mu_t) \leqslant t/\alpha$, as a consequence of the Bakry{--}\'Emery theory~\cite{bakry2014analysis}.
In this paper, we develop an understanding of when the log-Sobolev constant is expected to exhibit this scaling.
Since the vanishing noise limit is also referred to as ``ballistic'', we refer to the following quantity as
the {\it \bal{} log-Sobolev constant of $f$},
which we define as
\begin{equation}\label{eqn:cals}
\cals(f)  \deq  \lim_{t\to 0^+} \frac{\cls(\mu_t)}{t}\,,
\end{equation}
provided it exists.

\paragraph*{Main result.}
Our main result
gives an exact characterization of $\cals(f)$ in terms
of a fundamental constant from optimization,
namely the {\it Polyak--\L{}ojasiewicz (PL) constant of $f$}~\cite{lojasiewicz1963topological, polyak1964gradient}.
Assume that $f$ is bounded below, 
so that $f_\star  \deq  \inf_{x\in\R^d} f(x) >-\infty$.
Then the PL constant $\cpl(f)$ is
defined to be the least constant $C > 0$ such that
\begin{equation}\label{eqn:PL_defn}
f(x) - f_\star \leqslant \frac{C}{2}\,\|\nabla f(x)\|^2\,, \quad \quad \forall\, x \in \R^d\,,
\end{equation} or $+\infty$ if no such constant exists.
As we recall in more detail in Section~\ref{sec:background},
the PL constant can be
equivalently characterized as the best possible rate of exponential
convergence of gradient flow, uniformly over initializations,
and is thus a fundamental constant associated to $f$.
If $f$ is $\alpha$-strongly convex, then it is well-known that $\cpl(f) \leqslant 1/\alpha$,
yet $\cpl(f)$ can be finite even when $f$ is non-convex~\cite{karimi2016linear}.
Although the PL constant is therefore strictly weaker than
strong convexity, it
yields comparable optimization guarantees~\cite{karimi2016linear},
and thus forms the cornerstone of modern non-convex optimization.

\begin{theorem}[Main result: \bal{} log-Sobolev equals PL]\label{thm:main}
Suppose that $f \in C^2(\R^d)$
has a unique global minimizer,
and that there is a constant $L> 0$
so that $\Delta f \leqslant L\,(1+\|\nabla f\|^2)$.
For each $t > 0$, let $\mu_t  \deq  \frac{1}{Z_t}\, e^{-f/t}$ be a probability measure.
Then $\cals(f)$, defined in~\eqref{eqn:cals},
exists if and only if $\cpl(f) < \infty$,
and in this case
$$
\cals(f) = \cpl(f)\,.
$$
\end{theorem}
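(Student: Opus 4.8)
The plan is to establish two one‑sided bounds: (i) $\liminf_{t\to 0^+}\cls(\mu_t)/t\geqslant\cpl(f)$ always (read as $=+\infty$ when $\cpl(f)=\infty$); and (ii) $\limsup_{t\to 0^+}\cls(\mu_t)/t\leqslant\cpl(f)$ when $\cpl(f)<\infty$. Together these show that when $\cpl(f)<\infty$ the limit $\cals(f)$ exists and equals $\cpl(f)$, while when $\cpl(f)=\infty$ the bound (i) forces $\cls(\mu_t)/t\to\infty$, so no finite limit exists. Both bounds will be deduced from the elementary identities, valid for every admissible $\nu$ with density $\rho$ after one integration by parts (legitimate since $(\D\nu/\D\mu_t)^{1/2}$ is smooth and compactly supported),
\[
\KL{\nu}{\mu_t}=\tfrac1t\textstyle\int f\,\ud\nu+\log Z_t-h(\nu),\qquad \FI{\nu}{\mu_t}=I(\nu)-\tfrac2t\textstyle\int\Delta f\,\ud\nu+\tfrac1{t^2}\textstyle\int\|\nabla f\|^2\,\ud\nu,
\]
together with the Laplace asymptotics $\log(Z_t\,e^{f_\star/t})=\tfrac d2\log(2\pi t)-\tfrac12\log\det\nabla^2 f(x_\star)+o(1)$ as $t\to 0^+$; here $h(\nu)=-\int\rho\log\rho$ and $I(\nu)=\int\|\nabla\rho\|^2/\rho$ are the differential entropy and Fisher information of $\nu$, $x_\star$ is the global minimizer, and I will use the standard facts that $\cpl(f)<\infty$ forces $\nabla^2 f(x_\star)\succ 0$ and $\cpl(f)\,\nabla^2 f(x_\star)\succeq I$.

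For the lower bound (i), I will fix $\lambda<\cpl(f)$, choose $x_0$ with $\nabla f(x_0)\neq 0$ and $2\,(f(x_0)-f_\star)>\lambda\,\|\nabla f(x_0)\|^2$ (possible by definition of $\cpl(f)$; when $\cpl(f)=\infty$, $\lambda$ is arbitrary, approaching any spurious critical set or escaping to infinity through nearby non‑critical points), and take $\nu$ to be a smooth bump of width $\sqrt t$ centred at $x_0$, mollified into the admissible class. Using the scalings $h(\nu)\sim\tfrac d2\log t$ and $I(\nu)\sim\mathrm{const}/t$, the Laplace asymptotics, and a Taylor expansion of $f$ at $x_0$, I expect $t\,\KL{\nu}{\mu_t}\to f(x_0)-f_\star$ and $t^2\,\FI{\nu}{\mu_t}\to\|\nabla f(x_0)\|^2$. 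Then $\KL{\nu}{\mu_t}/\FI{\nu}{\mu_t}=t\cdot\big(t\,\KL{\nu}{\mu_t}\big)/\big(t^2\,\FI{\nu}{\mu_t}\big)>t\lambda/2$ for all small $t$, so $\cls(\mu_t)>t\lambda$; letting $\lambda\uparrow\cpl(f)$ closes the bound.

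For the upper bound (ii), I will fix $\varepsilon>0$, set $c=\cpl(f)+\varepsilon$, and show $\KL{\nu}{\mu_t}\leqslant\tfrac{tc}2\,\FI{\nu}{\mu_t}$ for all admissible $\nu$ and small $t$. By the identities above this is equivalent to
\[
\tfrac1t\textstyle\int\big(f-\tfrac c2\|\nabla f\|^2\big)\,\ud\nu+\log Z_t\;\leqslant\;h(\nu)+\tfrac{tc}2I(\nu)-c\textstyle\int\Delta f\,\ud\nu.
\]
The pointwise Polyak--\L{}ojasiewicz bound $f-\tfrac c2\|\nabla f\|^2\leqslant f_\star-\tfrac\varepsilon2\|\nabla f\|^2$ handles the left side, but inserting it bluntly loses a dimensional constant (it is tight only when $\nabla^2 f(x_\star)$ is a multiple of the identity), so I will split $\nu=(1-p)\,\nu_{\mathrm{near}}+p\,\nu_{\mathrm{far}}$ according to a small fixed ball about $x_\star$. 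On $\nu_{\mathrm{far}}$, after the PL bound, the growth hypothesis $\Delta f\leqslant L(1+\|\nabla f\|^2)$ makes the corresponding terms bounded below once $t$ is small, while the exponential smallness of $\mu_t$ off $x_\star$ (i.e. $Z_t e^{f_\star/t}\to 0$) absorbs the entropy and Fisher terms. On $\nu_{\mathrm{near}}$, a second‑order Taylor expansion of $f$ combined with the sharp Gaussian log‑Sobolev inequality --- which yields $h(\nu)+\tfrac\beta2 I(\nu)\geqslant\tfrac d2\log(2\pi\beta)+d$ and, decisively, retains the quadratic cancellation afforded by $\cpl(f)\,\nabla^2 f(x_\star)\succeq I$ --- produces a matching contribution. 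Recombining, then sending $t\to 0^+$ and $\varepsilon\to 0$, gives $\limsup_{t\to0^+}\cls(\mu_t)/t\leqslant\cpl(f)$.

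The hard part will be this upper bound, and within it the near‑field estimate. Using PL as a black box is too lossy: the argument must see that near $x_\star$ the function $f$ and its quadratic model contribute identically to leading order and that $\cpl(f)\,\nabla^2 f(x_\star)\succeq I$ makes the relevant quadratic form non‑positive, so that the $O(t)$ slack in PL is retained rather than discarded. Gluing the near‑ and far‑field estimates without generating spurious constants --- and carefully justifying the integrations by parts and the Laplace asymptotics under only $f\in C^2$ and the stated growth condition --- is the technical heart of the proof.
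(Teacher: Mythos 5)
Your overall architecture—a lower bound $\liminf_t \cls(\mu_t)/t \geqslant \cpl(f)$ via concentrated test measures and an upper bound $\limsup_t \cls(\mu_t)/t \leqslant \cpl(f)$ via a Gaussian comparison, PL, and a near/far split—matches the paper's, and your lower-bound sketch is essentially the paper's proof (the paper even handles the degenerate case $\cpl(f)=\infty$ by using only the crude estimate $\log Z_t \geqslant -L + \log(t^d\omega_d)$ rather than full Laplace asymptotics, which is necessary since $\nabla^2 f(x_\star)$ may then be singular). The upper bound, however, has concrete gaps that go beyond "technical heart to be filled in."

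First, the isotropic Gaussian LSI $h(\nu) + \frac{tc}{2}I(\nu)\geqslant\frac d2\log(2\pi tc)+d$ cannot suffice. Plugging in the plausibly extremal $\nu \approx \mathcal N(x_\star, t[\nabla^2 f(x_\star)]^{-1})$, the quantity you need to be nonpositive reduces to $\frac d2 - \frac{\cpl}{2}\mathrm{tr}\,\nabla^2 f(x_\star)$, which is strictly negative unless $\nabla^2 f(x_\star)=\frac1\cpl I$; the dimensional constant you flag is therefore a genuine deficit, not a removable artifact. The paper resolves this by comparing to the \emph{anisotropic} Gaussian $\mathcal N(0, \Sigma_t)$ with $\Sigma_t = t\,[\nabla^2 f(x_\star)]^{-1}$ \emph{before} splitting, so that the quadratic forms cancel exactly to leading order near $x_\star$, and then passing to $\cpl\,tI \succeq \Sigma_t$ only in the far-field. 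Second, your split $\nu = (1-p)\nu_{\text{near}} + p\nu_{\text{far}}$ is the wrong kind of decomposition: $h$, $I$, $\mathsf{KL}$, and $\mathsf{FI}$ are not additive over mixtures, so the near/far contributions do not decouple. The paper splits the \emph{integrals} over $B(x_\star, r)$ and its complement after the Gaussian LSI, and this produces boundary (spherical) terms which are then controlled by an annulus-averaging choice of radius—a nontrivial step absent from your plan. Third, and most fundamentally, this line of argument can only yield a \emph{defective} LSI, since the Taylor and Laplace errors are $o(1)$ additively, not $o(\mathsf{KL})$; you must then tighten using $\cp(\mu_t)=O(t)$ (established via a Lyapunov argument) together with the improved tightening lemma of Wang--Wang, without which the constant degrades by a factor of $2$. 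Your plan never mentions tightening or the Poincar\'e estimate, so as written it cannot produce a genuine LSI with the advertised constant.
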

The proof of this result is broken up into a lower bound in Section~\ref{sec:lower_bound}
and an upper bound in Section~\ref{sec:upper_bound}.
Although the result is presented for the asymptotic quantity
$\cals(f)$, we also extract non-asymptotic bounds for $\cls(\mu_t)$
in Remark~\ref{rmk:quantiative_log_sobolev}.
We remark here that the unique minimizer assumption is necessary for a connection
 between $\cpl(f)$ and $\cals(f)$;
see below for more discussion on this point.

En route to proving Theorem~\ref{thm:main}, we also establish the
following characterization of the {\it \bal{} Poincar\'e constant of $f$}, defined analogously as
\begin{equation}\label{eqn:defn_cap}
\cap(f)  \deq  \lim_{t\to 0^+}\frac{\cp(\mu_t)}{t}\,.
\end{equation} (We recall the definition of the Poincar\'e constant $\cp(\cdot)$ in Section~\ref{sec:background}.)

\begin{theorem}[\Bal{} Poincar\'e constant]\label{thm:main_poincare}
    Suppose that $f \in C^2(\R^d)$
    has a unique global minimizer $x_\star$, $\cpl(f) < \infty$, and
    there exists $L> 0$
    so that $\Delta f \leqslant L\,(1+\|\nabla f\|^2)$.
    Then $\cap(f)$, defined in~\eqref{eqn:defn_cap}, exists and
    \begin{align*}
        \cap(f)
        &= \frac{1}{\lambda_{\min}(\nabla^2 f(x_\star))}\,.
    \end{align*}
\end{theorem}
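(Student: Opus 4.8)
The plan is to prove matching asymptotic upper and lower bounds for $\cp(\mu_t)/t$. After translating, I assume $x_\star = 0$ and $f_\star = 0$ and write $H \deq \nabla^2 f(x_\star)$. The first step is to record the two consequences of $\cpl(f) < \infty$ on which everything rests: the PL inequality implies the quadratic growth bound $f(x) \ge \frac{1}{2\cpl(f)}\,\|x\|^2$ for all $x$, and feeding this back into PL gives the global gradient lower bound $\|\nabla f(x)\|^2 \ge \|x\|^2/\cpl(f)^2$. Quadratic growth together with a second-order Taylor expansion at $0$ forces $H \succeq \frac{1}{\cpl(f)}\, I \succ 0$, so $\lambda_{\min}(H) > 0$ and the right-hand side is finite; quadratic growth also yields, via a Laplace-type lower bound $Z_t \gtrsim t^{d/2}$ (where $f \in C^2$ near $0$ and $\Delta f \le L(1+\|\nabla f\|^2)$ supply the needed integrability), the exponential concentration $\mu_t(\{\|x\| \ge \delta\}) \to 0$ as $t \to 0^+$, for every $\delta > 0$.

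For the lower bound $\liminf_{t\to 0^+} \cp(\mu_t)/t \ge 1/\lambda_{\min}(H)$, I would test the Poincar\'e inequality for $\mu_t$ against $g_\delta(x) \deq \langle v, x\rangle\,\chi(x/\delta)$, where $v$ is a unit eigenvector of $H$ realizing $\lambda_{\min}(H)$ and $\chi$ is a smooth cutoff equal to $1$ on the unit ball and supported in the ball of radius $2$. On $\{\|x\| \le \delta\}$ one has $g_\delta(x) = \langle v, x\rangle$ and $\|\nabla g_\delta(x)\|^2 = 1$, while the contributions of $\{\|x\| \ge \delta\}$ to $\operatorname{Var}_{\mu_t}(g_\delta)$ and to $\int \|\nabla g_\delta\|^2 \, \ud\mu_t$ are exponentially negligible by the concentration estimate. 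A Laplace expansion around $0$ — equivalently, the law of $X/\sqrt t$ for $X \sim \mu_t$ converges to $\mathcal N(0, H^{-1})$ — then gives $\operatorname{Var}_{\mu_t}(g_\delta) = (1+o(1))\, t\, \langle v, H^{-1} v\rangle = (1+o(1))\, t/\lambda_{\min}(H)$ and $\int \|\nabla g_\delta\|^2\, \ud\mu_t = 1 + o(1)$, so that $\cp(\mu_t) \ge (1+o(1))\, t/\lambda_{\min}(H)$.

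For the matching upper bound I would run a Lyapunov argument with the sharp constant. Fix $\gamma \in (0,1)$, set $W_t \deq e^{\gamma f/t}$, and let $\mathcal L_t g \deq t\, \Delta g - \langle \nabla f, \nabla g\rangle$ be the time-rescaled Langevin generator of $\mu_t$; a direct computation gives $-\mathcal L_t W_t/W_t = \frac{\gamma(1-\gamma)}{t}\,\|\nabla f\|^2 - \gamma\, \Delta f$. Integrating by parts against $g^2/W_t$ and using AM–GM shows that, for every compactly supported smooth $g$,
$$\int \Bigl(\frac{-\mathcal L_t W_t}{W_t}\Bigr)\, g^2\, \ud\mu_t \;\le\; t\int \|\nabla g\|^2\, \ud\mu_t\,.$$
Now $\Delta f \le L(1+\|\nabla f\|^2)$ and $\|\nabla f(x)\|^2 \ge \|x\|^2/\cpl(f)^2$ give, for $t$ small, $-\mathcal L_t W_t/W_t \ge \theta_t \deq \frac{\delta^2}{\cpl(f)^2}\bigl(\frac{\gamma(1-\gamma)}{t} - \gamma L\bigr) - \gamma L \to +\infty$ on $\{\|x\| \ge \delta\}$, and $-\mathcal L_t W_t/W_t \ge -\gamma L$ everywhere. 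Splitting the displayed integral over $\{\|x\| \ge \delta\}$ and $\{\|x\| < \delta\}$, bounding the small-ball term by the local Poincar\'e inequality — the ball $B(0,\delta)$ is convex and $f$ is $(\lambda_{\min}(H) - \omega(\delta))$-strongly convex on it, where $\omega(\delta) \to 0$ by continuity of $\nabla^2 f$, so Bakry--\'Emery yields $\cp(\mu_t|_{B(0,\delta)}) \le t/(\lambda_{\min}(H) - \omega(\delta))$ — and using the tail bound $\mu_t(\{\|x\|\ge\delta\}) = o(1)$ to absorb the cross-term between the two regions, I arrive at $\cp(\mu_t) \le (1 + o_t(1))\, t/(\lambda_{\min}(H) - \omega(\delta))$. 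Sending $t \to 0^+$ and then $\delta \to 0^+$ gives $\limsup_{t\to0^+}\cp(\mu_t)/t \le 1/\lambda_{\min}(H)$, which together with the lower bound proves $\cap(f) = 1/\lambda_{\min}(\nabla^2 f(x_\star))$.

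The delicate point is the upper bound. The Lyapunov function must have a rate $\theta_t$ that genuinely diverges while the penalty it incurs on the small ball stays bounded; the choice $W_t = e^{\gamma f/t}$ achieves this precisely because PL forces $\|\nabla f\|$ to grow at least linearly away from $x_\star$, so the hypotheses $\cpl(f) < \infty$, the unique-minimizer assumption, and $\Delta f \le L(1+\|\nabla f\|^2)$ are all used in an essential way. The second point is the bookkeeping: the cross-term relating $\int_{\{\|x\|<\delta\}} g^2\,\ud\mu_t$ and $\int_{\{\|x\|\ge\delta\}} g^2\,\ud\mu_t$ must be controlled purely by the exponentially small mass $\mu_t(\{\|x\|\ge\delta\})$, so that the only surviving contribution to $\cp(\mu_t)/t$ is the local Bakry--\'Emery constant $1/(\lambda_{\min}(H)-\omega(\delta)) \to 1/\lambda_{\min}(H)$, with no spurious loss of constants.
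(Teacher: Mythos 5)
Your proposal is correct and follows the same broad blueprint as the paper's proof (a Laplace-type lower bound using linear test functions, plus a Lyapunov-function upper bound tightened to the local Bakry--\'Emery constant), but the key technical ingredient differs in an interesting way. For the upper bound, the paper runs the Lyapunov criterion (Lemma~\ref{lem:sufficient}) with the \emph{polynomial} Lyapunov function $W = (1+f/\delta)^k$, which gives a rate $\lambda \sim k/(\cpl t)$ outside the ball, so that the ``outside'' contribution $\lambda^{-1} \sim \cpl t/k$ survives at order $t$ and one must take $k\to\infty$ at the end to make it vanish; this choice has the virtue of yielding the clean non-asymptotic bound in~\eqref{eqn:non_asymptotic_poincare_constant_bound}. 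You instead take the \emph{exponential}, temperature-scaled Lyapunov function $W_t = e^{\gamma f/t}$, which gives a rate $\theta_t \sim \gamma(1-\gamma)\delta^2/(\cpl^2 t^2)$ (in unrescaled time), so the outside contribution is $O(t^2)$ and negligible without any auxiliary limit --- a slicker route to the sharp asymptotic, at the cost of a time-dependent Lyapunov function and a less clean quantitative estimate. Both choices are legitimate: Lemma~\ref{lem:sufficient} does not require $W$ to be $t$-independent, and the integration by parts in your displayed inequality is harmless since the test function $g$ is compactly supported, so no integrability of $W_t$ is needed. One small correction to your narrative: you attribute the control of the ``cross-term'' between the two regions to the tail bound $\mu_t(\{\|x\|\ge\delta\}) = o(1)$, but that is not what does the work in the upper bound --- the contribution from $\{\|x\|\ge\delta\}$ is killed by the divergence of the Lyapunov rate $\theta_t$ alone, with the small-ball penalty entering only through the factor $1 + \gamma L/\theta_t = 1 + O(t)$; the cleanest way to formalize your split is exactly to invoke Lemma~\ref{lem:sufficient} with $b = 1 + \gamma L/\theta_t$. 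The tail bound you mention is genuinely used in the lower bound, where your cutoff $g_\delta(x) = \langle v,x\rangle\chi(x/\delta)$ is a cosmetic variant of the paper's direct computation with $h = \langle v,\cdot\rangle$ (the paper carries this out exactly via Scheff\'e and $W_2$ convergence, but your version is equally valid).
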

This result is proved in Section~\ref{subsec:poincare_proof}.
Note that Theorem~\ref{thm:main_poincare}
is well-posed, since the
Hessian at the unique global minimizer is positive definite 
when $\cpl(f) < \infty$ (we recall
background on PL functions in Section~\ref{sec:background}).
And, as for Theorem~\ref{thm:main},
although the result is presented for the
asymptotic quantity $\cap(f)$,
we also extract non-asymptotic
bounds from the proof in Remark~\ref{rmk:poincare_quantitative}.
Theorem~\ref{thm:main_poincare}
reveals that under our assumptions, $\cals(f)$ depends on the \emph{global} optimization landscape of $f$, whereas $\cap(f)$ only depends on the \emph{local} behavior of $f$ around its minimizer.
On the other hand, it is known that $\limsup_{t\to 0^+}\frac{1}{t}\,\cp(\mu_t) < \infty$
when $f$ merely has a benign landscape, including,
in particular, some $f$ such that $\cpl(f) = \infty$~\cite{Li21Blog, kinoshita2022improved, li2023riemannian}.
Theorem~\ref{thm:main_poincare} therefore leaves open
a precise characterization of $\cap(f)$ in the
setting where $\cpl(f) = \infty$ yet its landscape is benign;
 we leave this interesting direction
to future work.

The remainder of the paper is organized as follows.
In the remainder of this section we give some remarks,
review additional related works, and finally fix our basic notation.
In Section~\ref{sec:background},
we collect together relevant background material and prove Theorem~\ref{thm:main_poincare}.
Then, we establish the lower bound for Theorem~\ref{thm:main} in Section~\ref{sec:lower_bound},
and the upper bound in Section~\ref{sec:upper_bound}.

\paragraph*{Interpretation via Otto calculus.}
Theorem~\ref{thm:main} is appealing in light of the interpretation, due to Otto~\cite{otto2001porousmedium}, of a ``Riemannian geometry'' over the space $\mc P_2(\R^d)$ of probability measures with finite second moment.
In this interpretation, the distance between two measures is given by the $2$-Wasserstein distance; the KL divergence $\KL{\cdot}{\mu_t}$ is viewed as a functional over this space; the Fisher information $\FI{\cdot}{\mu_t}$ is the squared norm of the gradient of $\KL{\cdot}{\mu_t}$; and the marginal law of the Langevin diffusion~\eqref{eqn:re-scaled_Langevin} is the gradient flow of $\KL{\cdot}{\mu_t}$.
The seminal work~\cite{otto2000generalization} was the first to introduce this connection.

From this perspective, the LSI arises precisely as a PL inequality for the KL divergence.
Indeed, just as the PL inequality governs the exponential rate of convergence for the gradient flow of $f$, the LSI governs the exponential rate of convergence in KL divergence of the marginal law of the Langevin diffusion.

When we write the density of the stationary distribution as $\mu\propto e^{-f}$, properties of the negative log-density $f$ give rise to properties of $\mu$.
Namely, it is known that strong convexity of $f$ implies strong convexity of the functional $\KL{\cdot}{\mu}$ over $\mc P_2(\R^d)$.
It is therefore quite natural to ask whether a PL inequality for $f$ implies a PL inequality for $\KL{\cdot}{\mu}$, i.e., a log-Sobolev inequality for $\mu$.
Although natural, it appears that no such connection was known prior to our work, as well as the recent concurrent work of~\cite{chen2024optimization},
which we discuss in detail below.
Theorem~\ref{thm:main} establishes such a correspondence in the low temperature limit.

\paragraph*{Beyond the unique minimizer case.}
We remark on the assumption that $f$ admits a
unique global minimizer.
In fact, this assumption is necessary
for a connection between the \bal{} log-Sobolev
constant and $\cpl(f)$.
Indeed, consider the prototypical example
where $f(x) = \frac{\alpha}{2}\,d^2(x, K)$ for a convex set $K \subset \R^d$ with non-empty interior, where $d^2(\cdot,K)$ denotes the squared distance function to $K$.
Then $\cpl(f) \leqslant 1/\alpha$, yet
$e^{-f/t}$ may not even be integrable.
Even when $\mu_t$ is well-defined, the log-Sobolev
constants $\cls(\mu_t)$ will not converge to $0$,
since $\mu_t$ itself will converge to the uniform measure
on $K$, so that $\cals(f) = \infty$.

On the other hand, the fact that
$\cpl(f)$ can be finite even when $f$ has multiple global minima is indeed one of the appealing properties of the PL condition, particularly
when compared with strong convexity.
One may thus hope that, so long as $\mu_t$ is well-defined,
the limiting behavior 
of $\cls(\mu_t)$ still contains information about the optimization
properties of $f$.
Thus, we ask whether the following more general limit holds
as soon as $\cls(\mu_0)  \deq  \lim_{t\to 0^+}\cls(\mu_t) < \infty$:
\begin{equation}\label{eqn:conjecture}
\lim_{t\to 0^+}\frac{\cls(\mu_t) - \cls(\mu_0)}{t} = \cpl(f)\,.
\end{equation}
Theorem~\ref{thm:main} corresponds to the case $\cls(\mu_0) = 0$ in~\eqref{eqn:conjecture}.
A proof of~\eqref{eqn:conjecture}
with exact constant appears out of reach of our proof techniques, since
we strongly rely
on the strong convexity of $f$ around its unique minimizer.

\paragraph*{Related work.}
Motivated by applications in statistical physics,
the study of the convergence of Langevin dynamics in the low
temperature regime has historically
focused on the
setting where the potential $f$ has multiple local minima.
Here, the convergence is exponentially slow, and the Arrhenius
law predicts that the rate is proportional
to the height of the energy barrier between the local minima~\cite{holley1989asymptotics,Ber13Kramer}.
The Eyring--Kramers formula refines the Arrhenius law to include
a precise description of the prefactor in front of the exponential in terms of the spectra of the Hessians at the local minimizer and the barrier~\cite{eyring1935activated,kramers1940brownian}.
The Eyring--Kramers formula was not rigorously
proved until the papers~\cite{bovier2004metastability,gayrard2005metastability},
which additionally provided asymptotics
for the spectrum of the weighted Laplacian,
including the Poincaré constant.
These results were extended to the log-Sobolev
constant in~\cite{menz2014poincare}, building on Lyapunov
criteria developed in~\cite{bakry2008simple,cattiaux2010note}.
We refer to the survey~\cite{Ber13Kramer} for more
discussion of the regime in which $f$ has multiple local
minima.

When the Langevin diffusion with small temperature
is used for finding minima of the function $f$, it is generally
called ``annealing": intuitively, the
addition of Brownian motion allows the particle to escape 
bad regions of the landscape.
Annealing is a classical and natural approach~\cite{gelfand1991recursive},
which has seen a resurgence of interest in recent years~\cite{dalalyan2017further, raginsky2017non,xu2017global,zhang2017hitting,tzen2018local},
largely motivated by major recent progress in sampling~\cite{chewi2023log}.
Of course, by the Arrhenius law, annealing
converges exponentially slowly when $f$ has multiple local minima. However, one can also consider annealing
when $f$ has only one local minimizer (the global minimizer).
Here, it turns out that the Langevin diffusion---as well as the
relevant functional inequality---behaves completely differently than in the case
of multiple local minimizers. In particular,
so long as $f$ has constant curvature around each
spurious critical point,
the Poincaré constants are $O(t)$, the log-Sobolev
constants are $O(1)$~\cite{menz2014poincare,li2023riemannian},
and annealing yields a polynomial-time algorithm for optimization~\cite{Li21Blog, kinoshita2022improved,li2023riemannian}.
In this paper, we essentially consider the regime where $f$ not 
only has just one local minimizer, but also has just one
critical point;
as far as we are aware, no other work has considered
the asymptotic behavior of the functional inequalities in
this regime.

The PL condition was first considered, independently,
by~\cite{polyak1964gradient} and in greater generality 
by~\cite{lojasiewicz1963topological}.
It was Polyak~\cite{polyak1964gradient} who first showed
it implies linear convergence of gradient descent.
The work~\cite{lojasiewicz1963topological} was focused on applications
in real algebraic geometry; we refer to~\cite{colding2014lojasiewicz}
for an overview of some modern developments
in this vein.
While the PL condition is quite natural for optimization, it appears to have
only recently received sustained
interest~\cite{karimi2016linear},
motivated by the widespread empirical success of first-order methods
for non-convex objectives.
Recent work, in particular, has been focused on
applications of the PL condition to the theory of deep learning, where it is known
to hold for sufficiently wide networks near initialization~\cite{liu2022loss}.
We have also found the PL condition to be strikingly powerful in our past work,
both in non-convex Riemannian optimization~\cite{chewi2020gradient,altschuler2021averaging},
as well as statistics~\cite{rigollet2022sample,stromme2023minimum}, leading us to believe that the PL inequality merits further study.

To the best of our knowledge, there is only
one other work which considers
functional inequalities under a PL assumption for the 
negative log-density: the concurrent paper~\cite{chen2024optimization},
which uses Lyapunov functions
and is focused on non-asymptotic bounds.
In particular, they consider $f$ with potentially more than one
global minimizer:
their bounds for $\cls(\mu_t)$ are of order $t^{-1}$ when
$f$ admits multiple minimizers, and of constant
order when $f$ does have a unique minimizer.
By contrast, our main Theorem~\ref{thm:main}
provides a new characterization of the PL condition
itself by
 showing $\cls(\mu_t) = t\,\cpl(f) + o(t)$ when $f$ has a unique minimizer.
In particular, our non-asymptotic upper bounds
on $\cls(\mu_t)$
are better by a factor of $t$, but
also capture the exact
leading order constant.
Our analysis for $\cls(\mu_t)$ also largely differs from that of~\cite{chen2024optimization},
in that we mainly rely on a novel direct argument to establish the LSI, rather than Lyapunov criteria.
While both papers use Lyapunov functions to control $\cp(\mu_t)$,
theirs uses a different Lyapunov function and more customized analysis, which, in particular, incurs
additional $f$-dependent constants
in the unique minimizer case, as compared to ours.
In particular,
the exact asymptotics for $\cp(\mu_t)$ from Theorem~\ref{thm:main_poincare} cannot be recovered from their bounds.

We finally mention that in the low temperature regime,~\cite{Che+23FisherLower} showed that the problem of finding a measure with Fisher information $O(d/t)$ relative to $\mu_t$ is equivalent to finding an $O(\sqrt{dt})$-stationary point of $f$, and subsequently used this to prove query lower bounds for the task of finding measures $\nu$ with small $\FI{\nu}{\mu_t}$.
This is another example of the connection between optimization and the low temperature limit of the Fisher information, in a similar spirit as investigated here.

\paragraph*{Acknowledgements.} We thank Pierre Monmarch\'e for helpful discussions, and Andre Wibisono for his involvement at an earlier stage of this project.

\paragraph*{Notation.}
Since we only work with absolutely continuous measures in this work, we abuse notation by identifying a probability measure $\nu$ with its Lebesgue density.
We write $\mc P(\R^d)$ for the space of probability measures over $\R^d$, $\mc N(m,\Sigma)$ for the Gaussian measure with mean $m$ and covariance $\Sigma$, and $T_\# \mu$ to denote the pushforward of a measure $\mu$ under the mapping $T$.
We also define $\|x\|_\Sigma^2 \deq \langle x,\Sigma\,x\rangle$.

The notation $B(x,r)$ refers to the open ball centered at $x$ with radius $r$.

\section{Preliminaries and the \bal{} Poincar\'e constant} \label{sec:background}

In this section, we collect together background material on functional
inequalities and PL functions, and then prove Theorem~\ref{thm:main_poincare}
on the \bal{} Poincar\'e constant.

\subsection{Background on functional inequalities
and PL functions}

\paragraph*{Background on functional inequalities.}
Given a probability measure $\mu \in \mathcal{P}(\R^d)$,
recall that its {\it Poincar\'e constant}, written $\cp(\mu)$,
is the least constant $C$ such that
for all smooth, compactly supported $h \colon \R^d \to \R$,
$$
\int h^2\, \ud \mu - \Bigl( \int h\, \ud \mu \Bigr)^2
\leqslant C \int \|\nabla h\|^2\, \ud \mu\,.
$$
For a function $f \colon \R^d\to\R$, for each $t > 0$ we define the probability measure $\mu_t \deq \frac{1}{Z_t} e^{-f/t}$, as before.

For $C, D \geqslant 0$,
we say that $\mu$ satisfies
a {\it $(C, D)$-defective log-Sobolev inequality (LSI)}
if, for all probability distributions $\nu$ such that $\bigl(\frac{\ud \nu}{\ud \mu} \bigr)^{1/2}$ is smooth,
$$
\KL{\nu}{\mu} \leqslant \frac{C}{2} \FI{\nu}{\mu} + D\,.
$$
Defective log-Sobolev inequalities are useful because,
in the presence of a Poincaré inequality,
they imply a full log-Sobolev inequality via the
so-called {\it tightening} procedure.
Although this is typically established via the Rothaus lemma~\cite{Rot1985},
our proofs crucially make use of the following recent improvement, from~\cite[Prop.\ 5]{wang2024uniform}.

\begin{lemma}[Improved tightening~\cite{wang2024uniform}]\label{lem:improved_tightening}
Suppose that $\mu \in \mathcal{P}(\R^d)$, $\cp(\mu) < \infty$
and $C, D \geqslant 0$.
If $\mu$ satisfies a $(C, D)$-defective
LSI,
then
$$
\cls(\mu) \leqslant 
C + \frac{D}{2} \, \cp(\mu)\,.
$$
\end{lemma}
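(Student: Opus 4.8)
The statement is equivalent to establishing the \emph{tight} LSI $\KL{\nu}{\mu}\leqslant \frac{C'}{2}\,\FI{\nu}{\mu}$ for all admissible $\nu$, where $C'\deq C+\frac D2\,\cp(\mu)$. The first thing I would try is the classical tightening argument: write $\nu = h\mu$, apply the defective LSI after recentring $h^{1/2}$, invoke the Rothaus lemma to pass from the recentred function back to $h$ at the cost of an extra additive multiple (with constant $2$) of $\operatorname{Var}_\mu(h^{1/2})$, and finally bound the residual variance by the Poincaré inequality. This yields $\cls(\mu)\leqslant C+\frac{D+2}{2}\,\cp(\mu)$, which overshoots the claimed bound by exactly $\cp(\mu)$, a slack traceable to the constant $2$ in the Rothaus lemma. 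So the real content is to remove this slack and land on the sharp leading constant.

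To do that I would run a semigroup argument. Let $P_s$ be the (time‑rescaled) Langevin semigroup reversible for $\mu$, set $h_s\deq P_s(\ud\nu/\ud\mu)$, and use the dissipation identity $\frac{\ud}{\ud s}\KL{h_s\mu}{\mu}=-\,\FI{h_s\mu}{\mu}$, so that $\KL{\nu}{\mu}=\int_0^\infty \FI{h_s\mu}{\mu}\,\ud s$ (the integral converges since $\cp(\mu)<\infty$ forces convergence to equilibrium). I would then exploit the two hypotheses in complementary regimes. While $\KL{h_s\mu}{\mu}$ stays above a threshold of order $D$, the defective LSI gives $\frac{\ud}{\ud s}\KL{h_s\mu}{\mu}\leqslant -\tfrac2C\bigl(\KL{h_s\mu}{\mu}-D\bigr)$, so Gr\"onwall forces the entropy to contract at rate $\approx 2/C$ and the dissipation integrated over this phase contributes $\approx \tfrac C2\,\FI{\nu}{\mu}$. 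Near equilibrium the Poincar\'e inequality takes over, used in the form $\operatorname{Var}_\mu\bigl((\ud\nu/\ud\mu)^{1/2}\bigr)\leqslant \tfrac{\cp(\mu)}{4}\,\FI{\nu}{\mu}$ (apply Poincar\'e to $(\ud\nu/\ud\mu)^{1/2}$ and note $\int|\nabla(\ud\nu/\ud\mu)^{1/2}|^2\,\ud\mu=\tfrac14\,\FI{\nu}{\mu}$), which supplies the remaining $\approx \tfrac{D\,\cp(\mu)}{4}\,\FI{\nu}{\mu}$. Adding the two should give exactly $\bigl(\tfrac C2+\tfrac{D\,\cp(\mu)}{4}\bigr)\FI{\nu}{\mu}=\tfrac{C'}{2}\,\FI{\nu}{\mu}$.

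The hard part will be doing this bookkeeping \emph{exactly} rather than up to a constant. A naive split of $\int_0^\infty \FI{h_s\mu}{\mu}\,\ud s$ at a deterministic time $T$ would require $\int_0^T \FI{h_s\mu}{\mu}\,\ud s\leqslant T\,\FI{\nu}{\mu}$, i.e.\ monotonicity of $s\mapsto\FI{h_s\mu}{\mu}$ along the flow; but that monotonicity needs a curvature lower bound on $-\log\mu$ (a $\Gamma_2\geqslant 0$‑type condition), which is \emph{not} assumed here. So the argument has to be organized so that it only ever invokes quantities that genuinely decrease along the flow --- the KL divergence and $\operatorname{Var}_\mu\bigl(h_s^{1/2}\bigr)$ --- together with the Poincar\'e bound at $s=0$. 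Concretely, I would aim to isolate the refined defective inequality
\[
\KL{\nu}{\mu}\;\leqslant\;\frac C2\,\FI{\nu}{\mu}+D\,\operatorname{Var}_\mu\bigl((\ud\nu/\ud\mu)^{1/2}\bigr)\,,
\]
in which the defect $D$ is now damped by a Hellinger‑type factor that is at most $1$ and is controlled by Poincar\'e via $\tfrac{\cp(\mu)}{4}\,\FI{\nu}{\mu}$; this is strictly sharper than what the Rothaus lemma produces, and establishing it with the sharp constant $D$ is the crux of the proof.
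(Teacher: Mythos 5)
The paper does not prove this lemma; it is imported verbatim from \cite[Prop.~5]{wang2024uniform}, so there is no in-paper argument to compare against. What can be said is that the known proof is a direct, variational sharpening of the Rothaus step, not a semigroup argument.

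Your proposal correctly diagnoses where the classical tightening loses: the factor $2$ in the Rothaus lemma. You also correctly identify the right intermediate target, namely
$$
\KL{\nu}{\mu}\ \leqslant\ \frac{C}{2}\,\FI{\nu}{\mu} + D\,\operatorname{Var}_\mu\bigl((\ud\nu/\ud\mu)^{1/2}\bigr)\,,
$$
which, combined with the Poincar\'e bound $\operatorname{Var}_\mu(h^{1/2})\leqslant\tfrac{\cp(\mu)}{4}\FI{\nu}{\mu}$, yields $\cls(\mu)\leqslant C+\tfrac{D}{2}\cp(\mu)$. But this inequality is \emph{strictly stronger} than the lemma's conclusion (since $\operatorname{Var}_\mu(h^{1/2})\leqslant 1$, it strengthens the hypothesis), and you do not prove it --- you explicitly flag it as ``the crux.'' Running the defective LSI on the centred function and then Rothaus gives this inequality with $D+2$ in place of $D$; the constant $2$ in Rothaus is sharp, so removing it is not a matter of careful bookkeeping. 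It requires a new Rothaus-type lemma in which the additive term $2\operatorname{Var}_\mu(f)$ is replaced by a quantity that the Poincar\'e inequality can swallow without the extra $\cp(\mu)$ --- that is the genuinely new input in \cite{wang2024uniform}, and it is exactly the step your proposal leaves open.

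The semigroup sketch in the middle does not fill this gap. You correctly note that Fisher-information monotonicity along the Langevin flow fails without a curvature assumption, and that $\operatorname{Var}_\mu(h_s^{1/2})$ does decrease along the flow (via Jensen and reversibility). But integrating the dissipation identity with the defective LSI in one regime and Poincar\'e in another only recovers the classical constant $C+\tfrac{D+2}{2}\cp(\mu)$ unless you already have the sharpened Rothaus-type control in hand; the two-regime split does not by itself extract the missing factor. So while your outline has the right overall shape (defective LSI on the centred function, absorb the defect by Poincar\'e) and the right target estimate, the proof of the key sharpened inequality is absent and is not reducible to the heuristics you give.
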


We remark that the standard tightening
result obtained from the Rothaus lemma, e.g.,~\cite[Prop.\ 5.1.3]{bakry2014analysis}, reads
$$
\cls(\mu) \leqslant C + \bigl(\frac{D}{2} + 1 \bigr) \, \cp(\mu)\,.
$$
Hence Lemma~\ref{lem:improved_tightening} improves upon standard
tightening by a factor of $\cp(\mu)$.
In particular, the improved tightening
result has the appealing property of being
stable under taking $C = \cls(\mu)$ and $D = 0$.
This stability is key for our upper bound on
$\frac1t\, \cls(\mu_t)$, since it permits us to obtain the correct constants 
by proving a defective LSI with $D = o(1)$;
without the improved tightening our upper bound would be off by a factor of $2$.

We recall that by linearization, a log-Sobolev inequality implies a Poincar\'e inequality with the same constant,
$\cp(\mu) \leqslant \cls(\mu)$.
From the Bakry{--}\'Emery theory, if $\mu$ is $\alpha$-strongly log-concave, then $\cls(\mu) \le 1/\alpha$~\cite{bakry2014analysis}.

Finally, we shall write the Lebesgue density of a probability measure $\nu$ as $e^{-g}$.
Let us briefly comment on this point.

\begin{remark}[Notation for the negative log-density]\label{rem:negative_log-density_notation}
Suppose that $\mu$ is a probability
measure which admits a Lebesgue density, also denoted $\mu$, such that $\log \mu \in C^2(\R^d)$,
and that $\nu$ is another probability measure such that 
$\bigl( \frac{\ud \nu}{\ud \mu} \bigr)^{1/2}$ is well-defined
and smooth. We shall often write the density of
$\nu$ as $e^{-g}$ and then manipulate expressions such as $\int \|\nabla g\|^2\, e^{-g}$
 and $\int \langle \nabla g, \nabla \log \mu \rangle\, e^{-g}$. These expressions should be understood
 as $4\int \|\nabla e^{-g/2} \|^2$ and $-\int \langle \nabla e^{-g},
 \nabla \log \mu \rangle$, respectively.
 These latter expressions are well-defined because we can write $e^{-g} =\frac{\ud \nu}{\ud \mu} \, \frac{\ud \mu}{\ud \mc L^d}$,
 so that $e^{-g/2}$, as well as $e^{-g}$, is $C^2$.
\end{remark}

\paragraph*{Background on PL functions.}
We begin by recalling the dynamical
formulation of the PL constant.
Although the static formulation of the PL constant~\eqref{eqn:PL_defn}
is ultimately more convenient for our proofs, the dynamical
formulation is more intuitive, and emphasizes
the fundamental nature of this constant.

Suppose $h\in C^2(\R^d)$ with
a finite infimum $h_\star \deq \inf_{x \in \R^d} h(x) > -\infty$.
Consider the gradient flow for $h$ initialized
at any $x_0 \in \R^d$, so that $\dot x_s = -\nabla h(x_s)$.
Then the dynamic Polyak--\L{}ojasiewicz constant
of $h$, written $\cpl^{\msf{dyn}}(h)$, is defined
to be the least constant $C$ such that,
for all $s \in [0, \infty)$, we have
\begin{equation}\label{eqn:exp_decrease_PL}
h(x_s) - h_\star \leqslant e^{-2s/C}\,(h(x_0) - h_\star)\,,
\end{equation} uniformly over all initial conditions $x_0 \in \R^d$,
or $+\infty$ if no such constant exists.
The next proposition shows that this is precisely $\cpl(h)$.
\begin{prop}[Dynamic and static PL constants
are equal]
Suppose $h \in C^2(\R^d)$ with a finite infimum
$h_\star \deq \inf_{x \in \R^d} h(x) > -\infty$.
Then $\cpl(h) < \infty$ if and only if $\cpl^{\msf{dyn}}(h) < \infty$,
and in this case
$$
\cpl^{\msf{dyn}}(h) =\cpl(h)\,.
$$
\end{prop}
\begin{proof}
Suppose $\cpl(h) < \infty$. Then
$$
\partial_s(h(x_s) - h_\star) = -\|\nabla h(x_s)\|^2 \leqslant 
-\frac{2}{\cpl(h)}\,(h(x_s) - h_\star)\,.
$$ Grönwall's inequality thus implies that~\eqref{eqn:exp_decrease_PL}
holds with $C= \cpl(h)$, so that $\cpl^{\msf{dyn}}(h)
\leqslant \cpl(h)$.

On the other hand, suppose $\cpl^{\msf{dyn}}(h) < C < \infty$.
Then, fix any $x_0 \in \R^d$, and note that
$$
-\|\nabla h(x_0)\|^2 = \lim_{s \to 0^+} \frac{h(x_s) - h(x_0)}{s}
\leqslant (h(x_0) - h_\star) \, \lim_{s\to 0^+} \frac{e^{-2s/C} - 1}{s}
= -\frac{2}{C} \, (h(x_0) - h_\star)\,.
$$
Therefore $\cpl(h) \leqslant \cpl^{\msf{dyn}}(h)$.
The result follows.
\end{proof}

For our proofs, we frequently use two important consequences
of the PL condition.
The following is known as the {\it quadratic growth
inequality}.
The earliest proof we are aware
of is a beautiful gradient
flow argument in the optimal
transport literature, due to~\cite{otto2000generalization}.
A closely related proof appears in~\cite{karimi2016linear}.

\begin{prop}[PL functions have quadratic growth]
Suppose $h \in C^2(\R^d)$
with $h_\star  \deq  \inf_{x \in \R^d} h(x) > -\infty$,
and such that $\cpl(h) < \infty$.
Let $S$ denote the set of minimizers of $f$.
Then, for all $x\in\R^d$,
\begin{equation}\label{eqn:qg}
   \frac{1}{2\cpl(h)}\,d(x, S)^2 \leqslant h(x) - h_\star\,,
\end{equation}
where $d(x, S)  \deq  \inf_{y \in S} \|x - y\|$ is defined
as usual.
\end{prop}

We remark that in the literature on optimal
transport, the above statement is the well-known fact that the log-Sobolev inequality
implies Talagrand's transport--entropy inequality~\cite{otto2000generalization}.
When $h$ has a unique minimizer,
the quadratic growth inequality~\eqref{eqn:qg}
implies the following useful control on the Hessian.

\begin{prop}[Hessian of PL functions at the minimizer]
\label{prop:PL_implies_Hessian_control}
Suppose $h \colon \R^d \to \R$ is such that $\cpl(h) < \infty$,
and that $h$ has a unique minimizer $x_\star$, around which it is
$C^2$. Then the quadratic growth inequality~\eqref{eqn:qg}
implies that
\begin{equation}\label{eqn:Hessian_at_minimizer}
\nabla^2 h (x_\star ) \succeq \frac{1}{\cpl(h)}\,I\,.
\end{equation}
\end{prop}

\subsection{\Bal{} Poincar\'e constant
of PL functions}
\label{subsec:poincare_proof}

In this subsection, we study the Poincar\'e constants $\cp(\mu_t)$,
as well as the \bal{} Poincar\'e constant defined in~\eqref{eqn:defn_cap}.
We perform this study for several reasons.
First,
in the course of establishing an upper bound for $\cals(f)$,
we will require the preliminary estimate $\cp(\mu_t) = O(t)$ in order to apply tightening.
While such bounds have been found under
weaker assumptions than PL (allowing for saddle points
and local maxima) in previous works~\cite{menz2014poincare, Li21Blog, kinoshita2022improved, li2023riemannian},
in the case of PL functions we can establish
improved non-asymptotic bounds.
Finally, these non-asymptotic bounds
permit us to derive the exact characterization of the
\bal{} Poincaré constant from Theorem~\ref{thm:main_poincare}.

\begin{lemma}[Upper bound on the \bal{} Poincar\'e constant for 
PL functions]\label{lem:improved_asymptotic_poincaré_under_PL}
Suppose that $f \in C^2(\R^d)$, has a unique minimizer $x_\star$,
is such that $\cpl \deq \cpl(f) < \infty$,
and that $\Delta f \leqslant L_0 + L_1\,\|\nabla f\|^2$ for some constants $L_0, L_1 > 0$.
Let $\alpha, r_0 > 0$ be such that $\nabla^2 f(x) \succeq \alpha I$ for all $x \in B(x_\star, r_0)$.
Define $\delta  \deq  r_0^2/(2\cpl)$,
take any
$k \geqslant 1$, and put
$$
t_0  \deq  \frac{2\delta}{\cpl L_0 + 2\delta L_1 + 2k-2}\,.
$$
Then for all $t < t_0$,
we have
\begin{equation}\label{eqn:non_asymptotic_poincare_constant_bound}
\cp(\mu_t) 
\leqslant \frac{\cpl t/ k}{1 - L_1 t - (k - 1)t/\delta - \cpl L_0 t/(2\delta)}
+ 
\Bigl(1 +\frac{\cpl L_0 t}{\delta - \delta L_1 t - (k - 1)t - \cpl L_0 t/2}
\Bigr)\, \frac{t}{\alpha}\,.
\end{equation}
In particular,
\begin{equation}\label{eqn:asymptotic_poincare_constant_bound}
    \limsup_{t\to 0^+}\frac{\cp(\mu_t)}{t} \leqslant \frac{1}{\lambda_{\min}(\nabla^2f(x_\star))}\,.
\end{equation}
\end{lemma}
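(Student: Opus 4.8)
The plan is to establish the non-asymptotic bound~\eqref{eqn:non_asymptotic_poincare_constant_bound} via a Lyapunov function argument, then take $t \to 0^+$ to derive~\eqref{eqn:asymptotic_poincare_constant_bound}. Recall that the natural generator here is $\mathcal{L}_t = t\,\Delta - \langle \nabla f, \nabla \cdot\rangle$, which is self-adjoint in $L^2(\mu_t)$, and that a standard route to Poincaré inequalities is the Bakry--Émery / Lyapunov criterion: if there exists a smooth $W \geqslant 1$ and constants $\lambda > 0$, $b \geqslant 0$, and a ball $B = B(x_\star, r)$ such that $\mathcal{L}_t W \leqslant -\lambda W + b\,\mathbf{1}_B$, then $\cp(\mu_t) \leqslant \lambda^{-1}\,(1 + b\,\cp(\mu_t\!\restriction_B))$, where $\cp(\mu_t\!\restriction_B)$ is the Poincaré constant of $\mu_t$ restricted to $B$ (see~\cite{bakry2008simple,cattiaux2010note}). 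The first step is therefore to choose $W = \exp(\beta\,(f - f_\star)/t)$ for a suitable $\beta \in (0,1)$ and compute $\mathcal{L}_t W / W = \beta\,\Delta f - (1-\beta)\,t^{-1}\,\|\nabla f\|^2$. Outside the ball $B(x_\star, r_0)$, quadratic growth~\eqref{eqn:qg} gives $\|\nabla f\|^2 \geqslant \frac{2}{\cpl}\,(f - f_\star) \geqslant \frac{r_0^2}{\cpl^2}$, and combined with the hypothesis $\Delta f \leqslant L_0 + L_1\|\nabla f\|^2$ this yields a negative drift of the required form; the parameter $k$ enters through the precise bookkeeping of how much of the $\|\nabla f\|^2$ term is spent dominating $\Delta f$ versus producing the decay rate $\lambda \asymp (k/\cpl)\,t^{-1}\cdot t = $ (the stated denominator structure), and $\delta = r_0^2/(2\cpl)$ is exactly the lower bound on $f - f_\star$ outside the ball.

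Second, I would bound the local Poincaré constant $\cp(\mu_t\!\restriction_{B(x_\star,r_0)})$. On this ball $\nabla^2 f \succeq \alpha I$, so $f$ is $\alpha$-strongly convex there; the restriction of $\mu_t$ to a convex set preserves strong log-concavity, and by Bakry--Émery $\cp(\mu_t\!\restriction_{B(x_\star,r_0)}) \leqslant t/\alpha$. Plugging $\lambda$, $b$, and this local bound into the Lyapunov estimate, rearranging, and solving the resulting inequality for $\cp(\mu_t)$ produces~\eqref{eqn:non_asymptotic_poincare_constant_bound}; the two summands correspond respectively to the $\lambda^{-1}$ term and to $\lambda^{-1} b\,(t/\alpha)$. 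One must check that the denominators are positive precisely when $t < t_0$, which is a direct algebraic verification using the definition of $t_0$.

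Third, for~\eqref{eqn:asymptotic_poincare_constant_bound}, I take $\limsup_{t\to 0^+}$ in~\eqref{eqn:non_asymptotic_poincare_constant_bound}: the first summand is $O(t)$ with $t^{-1}$-coefficient $\cpl/k$, and the second tends to $1/\alpha$ in the $t^{-1}$-scaling; hence $\limsup_{t\to0^+} t^{-1}\cp(\mu_t) \leqslant \cpl/k + 1/\alpha$. Now I optimize the free parameters: letting $k \to \infty$ kills the first term, and letting $r_0 \to 0$ (so that $\alpha \to \lambda_{\min}(\nabla^2 f(x_\star))$ by continuity of the Hessian, since $f \in C^2$ near $x_\star$) gives the bound $1/\lambda_{\min}(\nabla^2 f(x_\star))$. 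A small subtlety: $\delta$ and hence $t_0$ shrink as $r_0 \to 0$, but since we only need a statement about the $\limsup$ as $t \to 0^+$, for each fixed admissible choice of parameters the inequality is valid for all sufficiently small $t$, and we then pass to the infimum over parameter choices.

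The main obstacle I anticipate is the bookkeeping in the second step: carefully tracking the constants so that the improved-tightening-friendly form with denominators vanishing exactly at $t_0$ comes out, rather than a cruder bound. In particular, one wants to avoid wasteful estimates when splitting $\|\nabla f\|^2$ between cancelling $\beta\Delta f$ and generating the decay rate; getting the $1/\lambda_{\min}$ constant \emph{sharp} (rather than, say, $C/\lambda_{\min}$ for some constant) requires that the contribution of the far region be made $O(t)$ uniformly, i.e.\ that $b$ does not blow up as $t \to 0$, which is why $W$ must be chosen with the $t^{-1}$ in the exponent so that $\mathcal{L}_t W/W$ is $O(1/t)$ with the right sign structure.
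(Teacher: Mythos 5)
Your high-level plan — a Lyapunov criterion plus a Bakry–\'Emery bound on the restricted measure — is exactly the paper's route, but your choice of Lyapunov function does not produce the sharp constant, and I don't see how to repair it within your framework.

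With $W = \exp(\beta\,(f-f_\star)/t)$ the correct drift is $-\mathcal{L}_{\mu_t} W / W = \beta\bigl[\tfrac{1-\beta}{t}\,\|\nabla f\|^2 - \Delta f\bigr]$ (you dropped a factor of $\beta$ on the gradient term, but that is incidental). After applying $\Delta f \le L_0 + L_1\|\nabla f\|^2$ and the PL/quadratic-growth lower bound $\|\nabla f\|^2 \ge 2\delta/\cpl$ outside $B(x_\star,r_0)$, the drift rate you obtain outside the ball is, at leading order in $t$,
\[
\lambda \;\approx\; \frac{2\beta\,(1-\beta)\,\delta}{\cpl\,t}\,,
\]
which is maximized at $\beta = 1/2$, giving $\lambda^{-1} \approx 2\cpl t/\delta = 4\cpl^2 t/r_0^2$. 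The $t$-scale coefficient of $\lambda^{-1}$ is therefore bounded below by a quantity proportional to $\cpl^2/r_0^2$. This creates a genuine obstruction in Step 3 of your plan: sending $r_0 \to 0$ to push $\alpha \to \lambda_{\min}(\nabla^2 f(x_\star))$ simultaneously sends the first summand to $+\infty$ at $t$-scale, so the two free parameters cannot be optimized independently, and you end up with $\limsup_{t\to 0^+} t^{-1}\cp(\mu_t) \le 4\cpl^2/r_0^2 + 1/\alpha$, which never collapses to $1/\lambda_{\min}$. Your remark that ``$k$ enters through the bookkeeping of how $\|\nabla f\|^2$ is split'' does not correspond to an actual parameter in $W = e^{\beta(f-f_\star)/t}$: $\beta$ is the only free parameter, and $\beta(1-\beta) \le 1/4$ caps the achievable rate.

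The paper resolves this by taking the polynomial Lyapunov function $W = (1+f/\delta)^k$. Then $-\mathcal{L}_{\mu_t}W/W = \bigl\{\tfrac{k}{t} - \tfrac{k(k-1)}{f+\delta}\bigr\}\tfrac{\|\nabla f\|^2}{f+\delta} - \tfrac{k\Delta f}{f+\delta}$, and after applying PL the relevant ratio becomes $\tfrac{2k}{\cpl t}\cdot \tfrac{f}{f+\delta}$, which outside the ball is $\ge \tfrac{k}{\cpl t}$. The rate grows linearly in $k$, independently of $r_0$, so $\lambda^{-1} \approx \cpl t/k$ can be made arbitrarily small at $t$-scale by $k\to\infty$; this is the mechanism that leaves only the $1/\alpha$ term and, after $r_0 \to 0$, gives the sharp $1/\lambda_{\min}$. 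If you want to salvage your write-up, replace the exponential $W$ by this polynomial one and use the normalized criterion $-\mathcal{L}_{\mu_t}W/W \ge \lambda\,(1 - b\mathbf{1}_K)$, which yields $\cp(\mu_t) \le \lambda^{-1} + b\,\cp(\mu_t|_K)$ and keeps $b$ bounded as $t\to 0^+$; the unnormalized form $\mathcal{L}_t W \le -\lambda W + b\mathbf{1}_B$ would force $b \gtrsim \sup_B W$, which for an exponential-in-$f/t$ choice of $W$ diverges as $t\to 0^+$.
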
 

We emphasize that the statement is not vacuous since,
under the hypotheses,
Proposition~\ref{prop:PL_implies_Hessian_control} 
implies
there is always some radius for which
$\nabla^2f$ is positive-definite on $B(x_\star,r_0)$.
\begin{remark}\label{rmk:poincare_quantitative}
    Consider the smooth case $\nabla^2 f \preceq \beta I$, and take $L_0 = \beta d$, $L_1 = 0$.
    To obtain a non-asymptotic bound which is loose but more interpretable, we can take $k=1$ and $\alpha = 1/(2\cpl)$.
    Then, for all $t \le \delta/(\cpl \beta d)$,~\eqref{eqn:non_asymptotic_poincare_constant_bound} yields $\cp(\mu_t) \le 5\cpl t$.
\end{remark}

Before we turn to the proof,
let us apply this result
to prove Theorem~\ref{thm:main_poincare}.

\begin{proof}[Proof of Theorem~\ref{thm:main_poincare}]
    Without loss of generality, we may suppose that $f(x_\star) = 0$.
    For any unit vector $v \in \R^d$, by applying the Poincar\'e inequality to the test function $h = \langle v, \cdot\rangle$, we obtain $\cp(\mu_t) \ge \int \langle v, x-x_\star\rangle^2 \, \ud \mu_t(x) - (\int \langle v,x-x_\star\rangle\,\ud \mu_t(x))^2$.
    Further, by applying the change of variables $x = x_\star + \sqrt t\,z$,
    \begin{align}
        \cp(\mu_t)
        &\ge \int \langle v,x - x_\star\rangle^2\,\frac{e^{-f(x)/t}}{Z_t}\,\ud x - \Bigl(\int \langle v, x-x_\star\rangle\,\frac{e^{-f(x)/t}}{Z_t}\,\ud x\Bigr)^2 \nonumber\\
        &= t\,\Bigl[\int \langle v,z\rangle^2\,a_t b_t(z)\,\ud z - \Bigl(\int\langle v,z\rangle\,a_t b_t(z)\,\ud z\Bigr)^2\Bigr]\,,\label{eq:bal_poincare_lb}
    \end{align}
    where
    \begin{align*}
        a_t \deq \frac{\sqrt{\det(2\pi t\,[\nabla^2 f(x_\star)]^{-1})}}{Z_t}\,, \qquad b_t(z) = \frac{e^{-f(x_\star + \sqrt t\,z)/t}}{\sqrt{\det(2\pi \,[\nabla^2 f(x_\star)]^{-1})}}\,.
    \end{align*}
    We observe that $\int a_t b_t(z)\,\ud z = 1$, $b_t(z)\,\ud z \to \bar\mu \deq \mc N(0, [\nabla^2 f(x_\star)]^{-1})$ pointwise, and hence $\int b_t(z)\,\ud z \to 1$ by dominated convergence.
    Then,
    \begin{align*}
        1
        = \liminf_{t\to 0^+} \frac{1}{\int b_t}
        \le \liminf_{t\to 0^+} a_t
        \le \limsup_{t\to 0^+} a_t
        \le \limsup_{t\to 0^+} \frac{1}{\int b_t} = 1\,.
    \end{align*}
    We conclude that $a_t \to 1$, hence $a_t b_t \to \bar\mu$ pointwise, hence $a_t b_t \to \bar \mu$ in $L^1(\R^d)$ by Scheff\'e's lemma.
    Moreover, for any $R > 0$, the quadratic growth inequality~\eqref{eqn:qg} shows that
    \begin{align*}
        \int_{\|z\|\ge R} \|z\|^2 \,a_t b_t(z)\,\ud z
        \le \frac{a_t}{\sqrt{\det(2\pi \,[\nabla^2 f(x_\star)]^{-1})}} \int_{\|z\|\ge R} \|z\|^2 \exp\bigl(-\frac{\|z\|^2}{2\cpl(f)}\bigr)\,\ud z
    \end{align*}
    and therefore $\lim_{R\to\infty} \limsup_{t\to 0^+} \int_{\|z\|\ge R} \|z\|^2\,a_t b_t(z)\,\ud z = 0$.
    By~\cite[Thm.\ 7.12]{villani2003topics}, we have the convergence $W_2(a_t b_t, \bar \mu) \to 0$.
    From~\eqref{eq:bal_poincare_lb},
    \begin{align*}
        \liminf_{t\to 0^+} \frac{\cp(\mu_t)}{t}
        &\ge \sup_{v\in\R^d, \, \|v\| = 1} \Bigl[\int \langle v,z\rangle^2\,\ud \bar\mu(z) - \Bigl(\int \langle v,z\rangle\,\ud \bar \mu(z)\Bigr)^2\Bigr] \\
        &= \sup_{v\in\R^d, \, \|v\| = 1}\bigl\langle v, [\nabla^2 f(x_\star)]^{-1}\,v\bigr\rangle
        = \frac{1}{\lambda_{\min}(\nabla^2 f(x_\star))}\,. \qedhere
    \end{align*}
\end{proof}

To prove Lemma~\ref{lem:improved_asymptotic_poincaré_under_PL},
we use the Lyapunov criterion for the Poincar\'e inequality~\cite[Theorem 4.6.2]{bakry2014analysis},
which we include below for the reader's convenience.

\begin{lemma}[Poincar\'e Lyapunov criterion]\label{lem:sufficient}
    Suppose that there exists a Lyapunov function $W \colon \R^d \to \R$ with
$W \geqslant 1$, and a set $K \subseteq \R^d$, such that the following hold.
\begin{enumerate}
    \item[1.] The restricted measure $\mu|_K$ satisfies a Poincar\'e inequality with constant
    $C_K$, so that $\cp(\mu|_K) \leqslant C_K$.
    \item[2.] There are constants $b, \lambda > 0$ such that
    for almost every %
    $x \in \R^d$:
    $$
    -\frac{\mathcal{L}_\mu W(x)}{W(x)} \geqslant \lambda\,(1 - b\mathbbold{1}_K(x))\,,
    $$ where $\mathcal{L}_\mu$ is the generator for $\mu$,
    namely $\mathcal{L}_\mu  \deq  \Delta + \langle \nabla \log \mu, \nabla \cdot \rangle$.
\end{enumerate}
Then,
$$
\cp(\mu) \leqslant \lambda^{-1} + bC_K\,.
$$
\end{lemma}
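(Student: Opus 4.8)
The plan is to follow the classical route for Lyapunov criteria (see \cite[Thm.\ 4.6.2]{bakry2014analysis}): first turn the pointwise drift condition on $W$ into an integral inequality valid against \emph{all} smooth, compactly supported test functions, and then absorb the contribution of the ``bad'' region $K$ using the local Poincar\'e inequality on $\mu|_K$. Concretely, for such a test function $g \colon \R^d \to \R$ I would first establish the \emph{Lyapunov inequality}
$$
\int \frac{-\mathcal{L}_\mu W}{W}\, g^2 \, \ud\mu \;\leqslant\; \int \|\nabla g\|^2 \, \ud\mu\,.
$$
The mechanism is to apply the integration-by-parts identity $\int (\mathcal{L}_\mu W)\, h \, \ud\mu = -\int \langle \nabla W, \nabla h\rangle \, \ud\mu$ with the choice $h = g^2/W$; this is an admissible test function precisely because $W \geqslant 1$, so that $g^2/W$ is bounded, has support inside that of $g$, and is no less regular than $g$. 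Expanding $\nabla(g^2/W) = \frac{2g}{W}\,\nabla g - \frac{g^2}{W^2}\,\nabla W$ and applying the elementary bound $2\langle a, b\rangle \leqslant \|a\|^2 + \|b\|^2$ with $a = \nabla g$ and $b = (g/W)\,\nabla W$ makes the $\|\nabla W\|^2$ terms cancel, leaving exactly the displayed bound.

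With the Lyapunov inequality in hand, hypothesis 2 (the drift bound $-\mathcal{L}_\mu W/W \geqslant \lambda\,(1 - b\,\mathbbold{1}_K)$ a.e.) immediately gives, for every admissible $g$,
$$
\lambda \int g^2 \, \ud\mu \;\leqslant\; \int \|\nabla g\|^2 \, \ud\mu + \lambda b \int_K g^2 \, \ud\mu\,.
$$
Since this holds for all $g$ and $\nabla$ is insensitive to additive constants, I would apply it with $g$ replaced by $g - m_K$, where $m_K \deq \frac{1}{\mu(K)}\int_K g \, \ud\mu$ is the mean of $g$ under the normalized restriction $\mu|_K$. Hypothesis 1 (the local Poincar\'e inequality) then controls the bad term, $\int_K (g - m_K)^2 \, \ud\mu = \mu(K)\,\mathrm{Var}_{\mu|_K}(g) \leqslant C_K \int_K \|\nabla g\|^2 \, \ud\mu \leqslant C_K \int \|\nabla g\|^2 \, \ud\mu$, while the variational characterization $\mathrm{Var}_\mu(g) = \min_{c\in\R}\int (g - c)^2 \, \ud\mu$ gives $\mathrm{Var}_\mu(g) \leqslant \int (g - m_K)^2 \, \ud\mu$. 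Putting these together yields
$$
\lambda\,\mathrm{Var}_\mu(g) \;\leqslant\; \int \|\nabla g\|^2 \, \ud\mu + \lambda b\, C_K \int \|\nabla g\|^2 \, \ud\mu \;=\; (1 + \lambda b\, C_K) \int \|\nabla g\|^2 \, \ud\mu\,,
$$
that is, $\mathrm{Var}_\mu(g) \leqslant (\lambda^{-1} + b\, C_K)\int \|\nabla g\|^2 \, \ud\mu$; taking the supremum over admissible $g$ gives $\cp(\mu) \leqslant \lambda^{-1} + b\, C_K$.

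Everything after the Lyapunov inequality is elementary: one application of the elementary quadratic bound, one invocation of the local Poincar\'e inequality, and the fact that the variance is the best constant approximation in $L^2(\mu)$. The step I expect to require the most care is establishing the Lyapunov inequality itself --- justifying the integration by parts and the admissibility of $g^2/W$ under minimal regularity assumptions on $W$ (it may be only locally Lipschitz, and $\mathcal{L}_\mu W$ may be meaningful only in a weak or almost-everywhere sense) typically calls for a truncation-and-mollification argument or an appeal to the abstract Dirichlet-form formalism. This is precisely the technical layer handled carefully in \cite{bakry2014analysis}, which is why we are content to quote the statement here.
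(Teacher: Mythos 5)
Your proof is correct and is the standard argument behind this criterion: the paper itself does not prove Lemma~\ref{lem:sufficient} but quotes it from~\cite[Thm.\ 4.6.2]{bakry2014analysis}, and your route (the Lyapunov inequality via integration by parts with $h = g^2/W$ and Young's inequality, then recentering $g$ at its $\mu|_K$-mean and invoking the local Poincar\'e inequality plus the variational characterization of the variance) is precisely the one used there, yielding the stated constant $\lambda^{-1} + bC_K$. You also correctly flag that the only delicate point is the admissibility of $g^2/W$ in the integration by parts, which is where the cited reference does the technical work.
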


\begin{proof}[Proof of Lemma~\ref{lem:improved_asymptotic_poincaré_under_PL}]
Assume without loss of generality that the unique minimizer of $f$
is $0$, and that $f(0) = 0$.
Define $K  \deq  B(0, r_0)$ and set $W  \deq  (1+f/\delta)^k$.
We apply Lemma~\ref{lem:sufficient}.
Notice that since $f$ is $\alpha$-strongly
convex on $K$ and $K$ is convex, we have
$$
\cp(\mu_t|_K) \leqslant \frac{t}{\alpha}\,,
$$
c.f.~\cite[Theorem 3.3.2]{Wang14Diffusion}.

On the other hand, we may compute
$$
-\frac{\mathcal{L}_{\mu_t}W}{W} =  
\Bigl\{\frac{k}{t} - \frac{k\,(k - 1)}{f + \delta}\Bigr\}\,\frac{\|\nabla f\|^2}{f + \delta}
-\frac{k\Delta f}{f + \delta}\,.
$$
Applying the assumption on $\Delta f$ and the fact that $f \geqslant 0$
yields
\begin{align*}
-\frac{\mathcal{L}_{\mu_t}W}{W} &\geqslant
\Bigl\{\frac{k}{t} - kL_1 - \frac{k\,(k - 1)}{f + \delta}\Bigr\}\,\frac{\|\nabla f\|^2}{f + \delta}
-\frac{kL_0}{f + \delta}  \\
&\geqslant \Bigl\{\frac{1}{t} - L_1 - \frac{k-1}{\delta}\Bigr\}\,\frac{k\,\|\nabla f\|^2}{f + \delta}
-\frac{kL_0}{f + \delta}\,.
\end{align*} For $t$ sufficiently small, we may apply the 
PL inequality to obtain
\begin{equation}\label{eqn:W_ratio_lower_bound}
-\frac{\mathcal{L}_{\mu_t}W}{W} 
\geqslant \Bigl\{\frac{1}{t} - L_1 - \frac{k-1}{\delta}\Bigr\}\,\frac{2k}{\cpl} \,\frac{f}{f+\delta}
-\frac{kL_0}{f + \delta}\,.
\end{equation}
When $x \not \in K$, the quadratic growth inequality~\eqref{eqn:qg} implies
$$
f(x) \geqslant \frac{1}{2\cpl}\, \|x\|^2 
\geqslant \frac{r_0^2}{2\cpl} = \delta\,.
$$ But notice that~\eqref{eqn:W_ratio_lower_bound}
is monotonic in $f$, so for $x \not \in K$,
$$
- \frac{\mathcal{L}_{\mu_t}W}{W} \geqslant 
\Bigl\{\frac{1}{t} - L_1 - \frac{k-1}{\delta}\Bigr\} \,\frac{k}{\cpl} - \frac{kL_0}{2\delta} \eqqcolon \lambda\,.
$$ So long as $t$ is as small as in the statement, $\lambda > 0$.
On the other hand, if $x \in K$, then by monotonicity of
$f$ in~\eqref{eqn:W_ratio_lower_bound}
again,
we obtain
$$
- \frac{\mathcal{L}_{\mu_t}W}{W} \geqslant 
-\frac{kL_0}{\delta}\,.
$$
Defining $b  \deq  1 + \frac{kL_0}{\delta \lambda}$,
we conclude that $W \geqslant 1$ and
$$
- \frac{\mathcal{L}_{\mu_t}W}{W} \geqslant \lambda\,(1 - b \mathbbold{1}_K)\,.
$$

Lemma~\ref{lem:sufficient} thus implies
\begin{align*}
\cp(\mu_t) \leqslant \frac1\lambda + b\cp(\mu_t|_K)
&\leqslant \frac{\cpl t/ k}{1 - L_1 t - (k - 1)t/\delta - \cpl L_0 t/(2\delta)} \\
&\qquad + 
\Bigl(1 +\frac{\cpl L_0 t}{\delta - \delta L_1 t - (k - 1)t - \cpl L_0 t/2}
\Bigr)\, \frac{t}{\alpha}\,.
\end{align*} This yields~\eqref{eqn:non_asymptotic_poincare_constant_bound}.
Forming $\frac1t\, \cp(\mu_t)$ and taking
$t \to 0^+$ yields
$$
\limsup_{t\to 0^+} \frac{\cp(\mu_t)}{t}
\leqslant \frac{\cpl}{k} + \frac{1}{\alpha}\,.
$$ But since the above is true for every $k \ge 1$ and $\alpha
< \lambda_{\min}(\nabla^2f(0))$, equation~\eqref{eqn:asymptotic_poincare_constant_bound} follows.
\end{proof}

\section{Lower bound for the \bal{} log-Sobolev constant}\label{sec:lower_bound}

In this section, we prove the following lower bound.
\begin{theorem}[Lower bound of \bal{} log-Sobolev
by PL]
\label{thm:lower_bound}
    Suppose $f\colon \R^d \to \R$
    is minimized by some $x_\star \in \R^d$
    around which it is Lipschitz continuous.
    Then, for any point $x \in \R^d$
    around which $f$ is continuously differentiable,
    we have
    $$
    f(x) - f(x_\star) \leqslant \frac12\, \Bigl(\liminf_{t\to 0^+}\frac{\cls(\mu_t)}{t}\Bigr) \, \|\nabla f(x)\|^2\,.
    $$
In particular, if $f$ is minimized by
some $x_\star \in \R^d$ and is continuously
differentiable everywhere,
then
 $$
 \cpl(f) \leqslant \liminf_{t\to 0^+}\frac{\cls(\mu_t)}{t}\,.
 $$
\end{theorem}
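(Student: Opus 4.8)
The plan is to exhibit, for each point $x$ around which $f$ is $C^1$, a one-parameter family of admissible test measures $\nu_t$ and to feed them into the defining inequality $\KL{\nu_t}{\mu_t}\leqslant\tfrac12\,\cls(\mu_t)\,\FI{\nu_t}{\mu_t}$; rearranging and letting $t\to0^+$ will produce the pointwise bound, and taking a supremum over $x$ then gives the bound on $\cpl(f)$. The natural choice of $\nu_t$ is a Gaussian of variance of order $t$ centered at the probe point $x$, which is the scale at which the relative density $\ud\nu_t/\ud\mu_t$ ``sees'' only the linearization of $f$ at $x$; because the theorem assumes no growth of $f$, I truncate this Gaussian smoothly to a fixed compact neighborhood of $x$. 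Concretely, fix a smooth bump $\eta\colon\R^d\to[0,1]$ with $\eta\equiv1$ on $B(x,1)$ and $\operatorname{supp}\eta\subseteq B(x,2)$, the latter ball small enough to lie inside a neighborhood on which $f\in C^1$, and set $\nu_t\deq N_t^{-1}\,\eta^2\exp(-\|\cdot-x\|^2/(2t))$ with $N_t$ the normalizing constant. Then $\nu_t$ is a compactly supported probability measure whose relative density $\ud\nu_t/\ud\mu_t$ is positive and $C^1$ on its support, so $\nu_t$ may be used in~\eqref{eqn:lsi_defn} after a routine mollification.

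I would then compute the two relevant asymptotics. On $B(x,1)$ one has the clean identity $\nabla\log\tfrac{\ud\nu_t}{\ud\mu_t}(y)=t^{-1}\,(\nabla f(y)-(y-x))$, and the substitution $y=x+\sqrt t\,z$ together with dominated convergence on the compact support yields $\lim_{t\to0^+}t^2\,\FI{\nu_t}{\mu_t}=\|\nabla f(x)\|^2$; for the KL divergence, using $N_t=(2\pi t)^{d/2}\,(1+o(1))$, $\int\|y-x\|^2\,\ud\nu_t(y)=td\,(1+o(1))$ and $\int f\,\ud\nu_t\to f(x)$, one gets $t\,\KL{\nu_t}{\mu_t}=t\log Z_t+f(x)+o(1)$. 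The only non-routine input is a lower bound on the partition function, and this is exactly where the Lipschitz hypothesis at $x_\star$ enters: bounding $f(y)\leqslant f(x_\star)+\Lambda\,\|y-x_\star\|$ on a small ball gives $Z_t\geqslant c\,t^d\,e^{-f(x_\star)/t}$ for all small $t$, hence $\liminf_{t\to0^+}t\log Z_t\geqslant-f(x_\star)$ since $t\log t\to0$. Combining, $\liminf_{t\to0^+}t\,\KL{\nu_t}{\mu_t}\geqslant f(x)-f(x_\star)$.

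If $\nabla f(x)\neq0$, dividing the LSI by $t^2\,\FI{\nu_t}{\mu_t}\to\|\nabla f(x)\|^2>0$ gives
\[
\liminf_{t\to0^+}\frac{\cls(\mu_t)}{t}\;\geqslant\;\liminf_{t\to0^+}\frac{2\,t\,\KL{\nu_t}{\mu_t}}{t^2\,\FI{\nu_t}{\mu_t}}\;\geqslant\;\frac{2\,(f(x)-f(x_\star))}{\|\nabla f(x)\|^2}\,,
\]
which rearranges to the asserted pointwise inequality. If $\nabla f(x)=0$ then either $f(x)=f(x_\star)$, and the inequality is trivial, or $f(x)>f(x_\star)$, in which case the same two estimates give a numerator bounded below by a fixed positive constant and $t^2\,\FI{\nu_t}{\mu_t}\to0$, forcing $\liminf_{t\to0^+}t^{-1}\cls(\mu_t)=+\infty$. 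In all cases the pointwise bound holds, and the ``in particular'' statement follows by taking the supremum over $x\in\R^d$, since when $f$ is $C^1$ everywhere one has $\cpl(f)=\sup_x 2\,(f(x)-f(x_\star))/\|\nabla f(x)\|^2$ by definition (which is finite only if every critical point is a global minimizer, a constraint automatically enforced by the preceding case analysis).

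The main obstacle is the bookkeeping around the truncation: one must verify that $\FI{\nu_t}{\mu_t}$ is finite and that the annulus $B(x,2)\setminus B(x,1)$ — on which $\nabla\log\eta$ is unbounded and $\|y-x\|^2/t^2$ is large — contributes $o(t^{-2})$ to the Fisher information and $o(t^{-1})$ to the KL divergence. The point is that $\nu_t$ puts only exponentially small mass $O(t^{-d/2}e^{-1/(2t)})$ on that annulus, which beats every polynomial-in-$1/t$ growth of the integrands (and, conveniently, $\|\nabla\log\eta\|^2\,\eta^2=\|\nabla\eta\|^2$ is bounded, taming the only genuinely singular term). This truncation cannot be dropped: without a growth assumption on $\nabla f$, the Fisher information of an untruncated Gaussian relative to $\mu_t$ may be infinite. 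Every other estimate is dominated convergence on the fixed compact set $\operatorname{supp}\eta$, using only that $f$ is $C^1$ there.
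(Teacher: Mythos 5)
Your proof is correct, and it shares the same two essential inputs as the paper's argument: localize a test measure near the probe point $x$ so that the LSI feeds the behavior of $f$ at $x$ into the bound, and use the Lipschitz estimate at $x_\star$ to lower-bound $\log Z_t$ by $-f(x_\star)/t + O(\log t)$. The execution differs in the order of limits and the amount of explicit computation. You choose a $t$-dependent family $\nu_t$ (truncated Gaussian of variance $t$), which forces you to work out the exact asymptotics $t^2\,\FI{\nu_t}{\mu_t}\to\|\nabla f(x)\|^2$ and $t\,\KL{\nu_t}{\mu_t}=t\log Z_t+f(x)+o(1)$, with the accompanying bookkeeping on the annulus where $\nabla\log\eta$ is unbounded (correctly tamed by the $\eta^2$ weight). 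The paper instead keeps $\nu$ \emph{fixed}, expands the LSI, and multiplies through by $t_k$: the cross term $C t_k\int\langle\nabla f,\nabla g\rangle\,\ud\nu$, the $C t_k^2\int\|\nabla g\|^2\,\ud\nu$ term, and the $t_k\,(H(\nu)-\log(t_k^d\omega_d))$ term then all vanish automatically as $t_k\to0$, leaving $\int f\,\ud\nu\le\frac{C}{2}\int\|\nabla f\|^2\,\ud\nu$ for every admissible $\nu$ near $x$, after which $\nu\to\delta_x$ gives the pointwise claim. This avoids computing any Gaussian moments or Fisher-information asymptotics: the unwanted terms are killed by the algebra rather than by an explicit scaling choice, which makes the argument shorter and frees it from the truncation/annulus estimates you needed. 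Your route is more concrete and perhaps more illuminating as to ``why'' the $t$-scale is the right one, at the cost of more delicate bookkeeping and a (correctly acknowledged) mollification step to pass from a $C^1$ to a smooth relative density.
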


\begin{proof}
We assume without loss of generality that
$x_\star = 0$ and $f(x_\star) = 0$.
Take such an $x \in \R^d$ and let
$C > \liminf_{t\to 0^+}\frac1t\, \cls(\mu_t)$.
Then, there is some decreasing sequence $(t_k)_{k \in \N}$
of positive numbers such that $t_k \to 0$
and $\cls(\mu_{t_k}) < Ct_k$ for all $k \in \N$.
Let $\nu = e^{-g}$ be any smooth density
which is
supported in the neighborhood
of $x$ where $f$ is differentiable.
Then, it holds that
    \begin{align*}
        \KL{\nu}{\mu_{t_k}}
        &\le \frac{Ct_k}{2} \FI{\nu}{\mu_{t_k}}\,.
    \end{align*}
    We may expand to observe that
    \begin{align*}
        \frac{1}{t_k}\int f\,\ud \nu + \log Z_{t_k} - H(\nu)
        &\le \frac{Ct_k}{2}\int\bigl\lVert \nabla g - \frac{\nabla f}{t_k}\bigr\rVert^2\,\ud \nu \\
        &= \frac{Ct_k}{2}\int\Bigl[\frac{1}{t_k^2}\, \norm{\nabla f}^2 - \frac{2}{t_k} \, \langle \nabla f, \nabla g \rangle
        + \,\|\nabla g\|^2\Bigr]\,\ud \nu\,,
    \end{align*}
where
$H(\nu)  \deq  \int g e^{-g}$.
    
    On the other hand, let $L$ be the Lipschitz
    constant of $f$ near $0$.
    Then for $t$ sufficiently small we have
    \begin{align*}
        \log Z_t = \log \int\exp\bigl(-\frac{f}{t}\bigr)
        &\geqslant \log \int_{B(0, t)} \exp\bigl( - \frac{f}{t}\bigr)
        \ge - L + \log(t^d \omega_d) \,,
    \end{align*}
    where we write the volume of the unit ball in $\R^d$
    as $\omega_d$.
    In particular,
    $$
        \frac{1}{t_k}\int f\,\ud \nu  - L + \log(t_k^d \omega_d)
        - H(\nu)
        \leqslant
       C \, \int\Bigl[\frac{1}{2t_k}\, \norm{\nabla f}^2 -  \, \langle \nabla f, \nabla g \rangle
        + \frac{t_k}{2}\,\|\nabla g\|^2\Bigr]\,\ud \nu\,, 
    $$
    Multiplying by $t_k$ and re-arranging,
    we find
    \begin{align*}
        \int f\,\ud \nu
        &\le \frac{C}{2}\int\norm{\nabla f}^2\,\ud \nu - Ct_k \int \langle 
        \nabla f, \nabla g \rangle \, \ud \nu\\
        &\qquad{}
        + \bigl(L + H(\nu) - \log(t_k^d \omega_d)\bigr)\,t_k +\frac{Ct_k^2}{2} \int\|\nabla g\|^2\,\ud \nu\,.
    \end{align*}
     Taking $k \to \infty$, we obtain
     $$
     \int f\,\ud \nu \leqslant \frac{C}{2} \int\|\nabla f\|^2\,\ud \nu\,.
     $$
Since $C > \liminf_{t \to 0^+} \frac1t\, \cls(\mu_t)$
was arbitrary, it follows that
$$
\E_\nu f
        \le \frac12\, \Bigl(\liminf_{t\to 0^+}\frac{\cls(\mu_t)}{t}\Bigr) \int\norm{\nabla f}^2\,\ud \nu\,.
$$
Finally, since $\nu$ was an arbitrary smooth distribution
supported near $x$
and
$f$, $\|\nabla f\|^2$ are continuous around $x$,
we can take $\nu \to \delta_x$ to conclude.
\end{proof}

\section{Upper bound for the \bal{} log-Sobolev constant}\label{sec:upper_bound}

In this section, we prove the following upper bound.

\begin{theorem}[Upper bound of
\bal{} LSI by PL]\label{thm:upper_bound}
Suppose $f \in C^2(\R^d)$
has a unique global minimizer,
and that there are constants $L_0, L_1 > 0$
so that $\Delta f \leqslant L_0 + L_1\,\|\nabla f\|^2$.
For each $t > 0$, let $\mu_t  \deq  \frac{1}{Z_t}\, e^{-f/t}$.
Then
$$
\limsup_{t\to 0^+} \frac{\cls(\mu_t)}{t} \leqslant \cpl(f)\,.
$$
\end{theorem}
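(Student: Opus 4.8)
The plan is to establish a \emph{defective} log-Sobolev inequality for $\mu_t$ with constant $C_t = (\cpl(f)+o(1))\,t$ and defect $D_t = o(t/\cp(\mu_t))$, and then apply the improved tightening result (Lemma~\ref{lem:improved_tightening}) together with the preliminary Poincar\'e bound $\cp(\mu_t) = O(t)$ from Lemma~\ref{lem:improved_asymptotic_poincaré_under_PL}. Since $\cp(\mu_t) = O(t)$, the term $\frac{D_t}{2}\,\cp(\mu_t)$ contributes only $o(t)$, so that $\cls(\mu_t) \le (\cpl(f)+o(1))\,t + o(t)$, which gives the claim after dividing by $t$ and taking $\limsup$. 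The crucial point, stressed in the remark after Lemma~\ref{lem:improved_tightening}, is that the improved tightening is stable in the regime $D = o(1)$ and does not lose the leading constant; standard Rothaus-type tightening would cost an extra $\cp(\mu_t)$, i.e.\ an extra factor of order $t$, which is exactly at the scale we care about and would spoil the constant.

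To prove the defective LSI, I would write a competitor density as $\nu = e^{-g}$ with $(\ud\nu/\ud\mu_t)^{1/2}$ smooth and compactly supported, and bound $\KL{\nu}{\mu_t}$ directly. The idea is to localize near the unique minimizer $x_\star$: by the quadratic growth inequality~\eqref{eqn:qg} and the Hessian bound~\eqref{eqn:Hessian_at_minimizer}, $f$ is strongly convex on a ball $B(x_\star, r_0)$ with $\nabla^2 f \succeq \alpha I$ for some $\alpha$ close to $1/\cpl(f)$, while on the complement $f \ge \delta \deq r_0^2/(2\cpl)$, so $\mu_t$ places exponentially little mass (of order $e^{-\delta/t}$) outside the ball. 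On the ball, Bakry--\'Emery gives a genuine LSI for $\mu_t|_{B(x_\star,r_0)}$ with constant $\le t/\alpha$, and one transfers this to a defective LSI for $\mu_t$ on all of $\R^d$ by a standard decomposition/restriction argument; the defect $D_t$ collects the contributions from the tail region and is controlled by the small mass $\mu_t(B(x_\star,r_0)^c) = e^{-\Omega(1/t)}$ together with the growth control $\Delta f \le L_0 + L_1\|\nabla f\|^2$ (used to keep the relevant integrals against $\nu$ from blowing up, as in Remark~\ref{rem:negative_log-density_notation}). Letting $r_0 \to 0$ afterwards sends $\alpha \to \lambda_{\min}(\nabla^2 f(x_\star))$, but we actually want $\alpha \to 1/\cpl(f)$ --- so the tail region cannot simply be discarded, and the argument must genuinely use the global PL inequality on the complement of the ball to show it contributes at the right order. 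Concretely I expect to keep the defective LSI constant at $t/\alpha$ with $\alpha$ a fixed strong-convexity constant on a small ball, prove $\cls(\mu_t) \le t/\alpha + o(t)$ for every such $\alpha$, and then close the gap between $1/\alpha = 1/\lambda_{\min}(\nabla^2 f(x_\star))$ and $\cpl(f)$ by a separate global argument --- most likely by instead building the defective LSI on a \emph{large} region (a sublevel set $\{f \le M\}$ for $M$ slowly growing as $t\to0$) rather than a fixed small ball, where the relevant curvature-type quantity is governed by $\cpl(f)$ via quadratic growth, not by the Hessian at $x_\star$.

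The main obstacle, then, is precisely this last point: a crude localization to a neighborhood of $x_\star$ recovers only the \emph{local} constant $1/\lambda_{\min}(\nabla^2 f(x_\star))$ (which is Theorem~\ref{thm:main_poincare}'s answer for the Poincar\'e constant, and is in general \emph{smaller} than $\cpl(f)$), whereas the log-Sobolev constant must see the global landscape. The delicate part is to run the defective-LSI argument on an exhausting family of regions --- e.g.\ sublevel sets $\{f < M_t\}$ with $M_t \to \infty$ slowly --- so that on each region one can use a Bakry--\'Emery-type or direct entropy estimate whose constant converges to $\cpl(f)$ (using the PL inequality and quadratic growth~\eqref{eqn:qg} rather than pointwise strong convexity), while simultaneously controlling the defect $D_t$, which now comes from a larger tail region with only $\mu_t(\{f \ge M_t\}) = e^{-M_t/t}$ and from boundary terms on $\{f = M_t\}$. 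Balancing the growth rate of $M_t$ against $t$ so that both the leading constant is captured and $D_t = o(t/\cp(\mu_t)) = o(1)$ is the heart of the proof; the growth hypothesis $\Delta f \le L_0 + L_1\|\nabla f\|^2$ is what makes the boundary and tail estimates go through uniformly.
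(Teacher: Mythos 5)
Your framework is correct and matches the paper's at the top level: establish a $(C_t, D_t)$-defective LSI with $C_t = (\cpl(f)+o(1))\,t$ and $D_t = o(1)$, then invoke the improved tightening of Lemma~\ref{lem:improved_tightening} together with $\cp(\mu_t) = O(t)$ from Lemma~\ref{lem:improved_asymptotic_poincaré_under_PL}; and you are right that Rothaus tightening would insert an extra $\cp(\mu_t) \asymp t$ and so destroy the leading constant. You have also correctly diagnosed the conceptual obstacle: a localization near $x_\star$, taken at face value, returns the local constant $t/\lambda_{\min}(\nabla^2 f(x_\star))$, which is generically \emph{strictly smaller} than $\cpl(f)\,t$, so the tail must enter at leading order rather than only through the defect.

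The proposal, however, leaves a genuine gap at exactly that point. Your first plan (restrict to a fixed ball $B(x_\star, r_0)$ where $\nabla^2 f \succeq \alpha I$, apply Bakry--\'Emery, and transfer back) cannot close: if it produced $\cls(\mu_t) \le t/\alpha + o(t)$ with $\alpha \to \lambda_{\min}(\nabla^2 f(x_\star))$, it would violate the lower bound of Theorem~\ref{thm:lower_bound} whenever $\cpl(f) > 1/\lambda_{\min}(\nabla^2 f(x_\star))$; so the defect this produces is necessarily $\Theta(1)$, not $o(1)$. Your fallback (run a defective LSI on sublevel sets $\{f\le M_t\}$, using a ``Bakry--\'Emery-type or direct entropy estimate whose constant converges to $\cpl(f)$'') names the goal but not the mechanism: Bakry--\'Emery needs pointwise curvature, which PL functions do not supply on sublevel sets, and neither the PL inequality nor quadratic growth~\eqref{eqn:qg} directly gives an entropy--Fisher comparison there. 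That conversion is the actual content of the proof, and it is done in the paper by a quite different device: $\KL{\nu}{\mu_t}$ is decomposed against the Gaussian $\mc N(0,\Sigma_t)$ with $\Sigma_t = t\,[\nabla^2 f(x_\star)]^{-1}$ using the Gaussian LSI and the normalizing-constant estimate of Lemma~\ref{lem:normalizing_constant_control}; space is split into a ball of radius $r\asymp\sqrt t$ (where Taylor expansion gives $\tfrac{f}{t}\approx\tfrac{1}{2t^2}\|\nabla f\|_{\Sigma_t}^2$) and its complement (where PL together with $\Sigma_t \preceq \cpl t\,I$ gives the $\tfrac{\cpl t}{2}$ factor); completing the square yields $\tfrac{\cpl t}{2}\FI{\nu}{\mu_t}$ plus cross terms, which are dispatched by the divergence theorem, an averaging argument over the boundary radius $r\in[r_0,2r_0]$, and the growth hypothesis $\Delta f\le L_0 + L_1\|\nabla f\|^2$. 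A sublevel-set variant would need analogues of each of these steps, in particular the boundary-term control, which your proposal does not supply.
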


By combining this result with Theorem~\ref{thm:lower_bound}
from the last section, we immediately arrive at our main result, Theorem~\ref{thm:main}.

\begin{proof}[Proof of Theorem~\ref{thm:upper_bound}]
If $\cpl = \cpl(f) = \infty$, then there is nothing to show, so we may
assume $\cpl < \infty$.
Throughout the proof,
we assume without loss of generality that $x_\star = 0$ and $f(x_\star) = f(0) = 0$,
and let $\Sigma_t \deq  t\,[\nabla^2 f(0)]^{-1}$, which is well-defined
by~\eqref{eqn:Hessian_at_minimizer}.

Let $\nu$ be any
compactly supported probability measure such that $\bigl(\frac{\ud \nu}{\ud \mu_t}\bigr)^{1/2}$ is smooth
and write the Lebesgue density of $\nu$ as $e^{-g}$;
see Remark~\ref{rem:negative_log-density_notation} for an explanation of this notation.

We begin by expanding the KL divergence 
$$
\KL{\nu}{\mu_t} = -\int g e^{-g} + \frac1t  \int fe^{-g}
+ \log Z_t\,.
$$

{\bf Step 1: Gaussian log-Sobolev inequality.}
For this step, we start by rewriting the KL divergence
to obtain a $\KL{\nu}{\mathcal{N}(0, \Sigma_t)}$ term:
$$
\KL{\nu}{\mu_t} = \KL{\nu}{\mathcal{N}(0, \Sigma_t)}
+ \int \bigl(\frac{f}{t} - \frac12\, \|x\|^2_{\Sigma_t^{-1}}\bigr)\,e^{-g}
+ \log \frac{Z_t}{\sqrt{\det(2\pi \Sigma_t)}}\,.
$$
In Lemma~\ref{lem:normalizing_constant_control} below, we verify that the last term involving the ratio of normalizing constants is $o(1)$.

Next, we apply the Gaussian log-Sobolev
inequality with covariance $\Sigma_t$ (see the discussion after~\cite[Prop.\ 5.5.1]{bakry2014analysis})
to the first term, which yields
\begin{align*}
\KL{\nu}{\mu_t}  &=\KL{\nu}{\mathcal{N}(0, \Sigma_t)}
+ \int \bigl(\frac{f}{t} - \frac12\, \|x\|^2_{\Sigma_t^{-1}}\bigr)\,e^{-g} + o(1)\\
&\leqslant \frac12 \int 
\|\nabla g - \Sigma_t^{-1} x\|^2_{\Sigma_t}\,e^{-g} + \int \bigl(\frac{f}{t} - \frac12\, \|x\|^2_{\Sigma_t^{-1}}\bigr)\,e^{-g} + o(1) \\
&=  \frac12 \int \|\nabla g\|^2_{\Sigma_t}\, e^{-g} + \frac1t \int fe^{-g} - d + o(1)\,,
\end{align*} 
where the last equality follows by integration by parts.

{\bf Step 2: Separating into small and large scales.}
The second step is to split into a small-scale region $B  \deq  B(0, r)$
for some $r = r(t)$ to be chosen later,
and its complement $B^\comp$.
Define the modulus of continuity of the Hessian,
\begin{align*}
    \rho(r)
    &\deq \sup_{x\in B(0,r)} \norm{\nabla^2 f(x) - \nabla^2 f(0)}_{\rm op}\,,
\end{align*}
and note that since $f$ is of class $C^2$, we have $\rho(r) \to 0$ as $r\to 0^+$.
In the small-scale region $B$, we use Taylor expansion to observe that if $x \in B$, then
$$
f(x) = \frac12\,\langle x, \nabla^2 f(0)\,x\rangle + O(r^2\rho(r))\,, \qquad \quad 
\nabla f(x) = \nabla^2 f(0)\,x + O(r\rho(r))\,.
$$ Therefore, for all $x \in B$,
\begin{align*}
    \|\nabla f(x)\|^2_{\Sigma_t}
    &= t\,\langle \nabla f(x), \nabla^2 f(0)^{-1}\, \nabla f(x)\rangle
    = t\,\bigl\{\langle x, \nabla f(x)\rangle + O\bigl(r\rho(r)\,\norm{\nabla^2 f(0)^{-1}\,\nabla f(x)}\bigr)\bigr\} \\
    &= t\,\bigl\{\langle x, \nabla^2 f(0) \,x\rangle + O\bigl(r^2 \rho(r) + \cpl r^2 \rho(r)^2\bigr)\bigr\}
\end{align*}
where we used $\nabla^2 f(0)^{-1} \preceq \cpl I$ from~\eqref{eqn:Hessian_at_minimizer}.
Hence,
$$
\frac1t \int_B fe^{-g}  = \frac{1}{2t^2} \int_B \|\nabla f\|^2_{\Sigma_t}\,e^{-g}
+ \underbrace{O\Bigl( \frac{r^2\rho(r) + \cpl r^2 \rho(r)^2}{t}\Bigr)}_{\eqqcolon \msf E_0(r)}\,.
$$
On the large-scale region $B^\comp$ we use the fact that $\Sigma_t \preceq \cpl tI$ via~\eqref{eqn:Hessian_at_minimizer},
and the PL inequality directly
to note that
$$
\int_{B^\comp} \bigl\{ \frac{f}{t} + \frac12\,\|\nabla g\|_{\Sigma_t}^2 \bigr\}\,e^{-g}
\leqslant \frac{\cpl t}{2} \int_{B^\comp} \bigl\{ 
\frac{\|\nabla f\|^2}{t^2} + \|\nabla g\|^2\bigr\}\,e^{-g}\,.
$$
Combining these bounds yields
\begin{align*}
    \KL{\nu}{\mu_t} &\leqslant  \frac12 \int \|\nabla g\|^2_{\Sigma_t}\, e^{-g} + \frac1t \int f e^{-g} - d + o(1)  \\
    &= \int_B \bigl\{ \frac{f}{t} + \frac12\,\|\nabla g\|_{\Sigma_t}^2 \bigr\}\,e^{-g}
    + \int_{B^\comp} \bigl\{ \frac{f}{t} + \frac12\,\|\nabla g\|_{\Sigma_t}^2 \bigr\}\,e^{-g}
    - d + o(1) \\
    &\leqslant \frac{1}{2} \int_B \bigl\{ \frac{1}{t^2}\,\|\nabla f\|_{\Sigma_t}^2 + \|\nabla g\|_{\Sigma_t}^2 \bigr\}\,e^{-g}
    + \frac{\cpl t}{2} \int_{B^\comp} \bigl\{ \frac{\|\nabla f\|^2}{t^2} + \|\nabla g\|^2\bigr\}\,e^{-g}
    \\
    &\qquad{} - d + \msf E_0(r) + o(1)\,.
\end{align*}
Adding and subtracting inner product terms and applying
$\Sigma_t \preceq \cpl tI$ again, we arrive at
\begin{align*}
    \KL{\nu}{\mu_t} 
     &\le \frac12 \int_B \bigl\lVert\frac{\nabla f}{t}-\nabla g \bigr\rVert^2_{\Sigma_t}\,e^{-g}
     + \frac{\cpl t}{2} \int_{B^\comp} \bigl\lVert\frac{\nabla f}{t}-\nabla g \bigr\rVert^2\,e^{-g} \\
     &\qquad{} + \frac1t \int_B \langle \nabla f, \Sigma_t\,\nabla g\rangle\,e^{-g} - d
     + \cpl \int_{B^\comp} \langle \nabla f, \nabla g \rangle\,e^{-g} + \msf E_0(r) + o(1)\\
     &\leqslant \frac{\cpl t}{2} \FI{\nu}{\mu_t} \\
     &\qquad{} + \frac1t \int_B \langle \nabla f, \Sigma_t\,\nabla g\rangle\,e^{-g} - d
     + \cpl \int_{B^\comp} \langle \nabla f, \nabla g \rangle\,e^{-g} + \msf E_0(r) + o(1)\,.
\end{align*}

{\bf Step 3: Reduction to large-scale terms.}
To handle these inner product terms, we apply
integration by parts. To this end, let $\omega_d$ denote
the standard volume measure on the unit sphere $\S^{d - 1}$.
Then, by the divergence theorem,
\begin{align*}
&\frac1t \int_B \langle \nabla f, \Sigma_t\, \nabla g\rangle\,e^{-g}
= \int_B \langle \nabla f, \nabla^2f(0)^{-1} \,\nabla g \rangle\,e^{-g} \\
&\qquad = \int_B \tr(\nabla^2 f(0)^{-1}\, \nabla^2 f) \,e^{-g}
- \int \langle w, \nabla^2f(0)^{-1}\,\nabla f(rw) \rangle\, r^{d - 1} e^{-g(rw)}\, \ud \omega_d(w)\,.
\end{align*}
Also, for all $x \in B$ we have
$$
\tr(\nabla^2 f(0)^{-1}\, \nabla^2f(x)) = d + O(\cpl d\rho(r))\,.
$$ Hence,
\begin{align*}
&\frac1t \int_B \langle \nabla f,\Sigma_t\, \nabla f\rangle \,e^{-g} - d \\
&\qquad = d \int_B e^{-g}  - d
- \int \langle w, \nabla^2f(0)^{-1}\,\nabla f(rw) \rangle\, r^{d - 1} e^{-g(rw)}\, \ud \omega_d(w) + O(\cpl d\rho(r)) \\
&\qquad \le - \int \langle w, \nabla^2f(0)^{-1}\,\nabla f(rw) \rangle\, r^{d - 1} e^{-g(rw)}\, \ud \omega_d(w) + O(\cpl d\rho(r))\,.
\end{align*}
Applying Cauchy--Schwarz, equation~\eqref{eqn:Hessian_at_minimizer},
and the fact that $\nabla f$ is Lipschitz around the minimizer,
we have
\begin{align*}
    &- \int \langle w, \nabla^2f(0)^{-1}\,\nabla f(rw) \rangle\, r^{d - 1} e^{-g(rw)}\, \ud \omega_d(w) \\
    &\qquad \le \int \{-r + O(\cpl r \rho(r))\}\, r^{d - 1} e^{-g(rw)}\, \ud \omega_d(w)
    = O(\cpl r^d \rho(r)) \int e^{-g(rw)}\,\ud \omega_d(w)\,.
\end{align*}
For ease of notation, let us write this
last integral in terms of
$s(r)  \deq r^d \int e^{-g(rw)}\, \ud \omega_d(w)$.
Applying the divergence theorem to the 
large-scale inner product term,
as well as our assumption on $\Delta f$, we obtain, 
\begin{align*}
 \int_{B^\comp} \langle \nabla f, \nabla g \rangle\,e^{-g}
 &= \int_{B^\comp} \Delta f\,e^{-g} + \int \langle
 \nabla f(rw), w \rangle \, r^{d - 1}\,e^{-g(rw)}\, \ud \omega_d(w) \\
 &\leqslant L_0 \int_{B^\comp} e^{-g} + L_1 \int \|\nabla f\|^2\,e^{-g} + \lambda_{\max}\, s(r) + O(\rho(r)\,s(r))\,,
\end{align*}
where $\lambda_{\max} = \lambda_{\max}(\nabla^2 f(0))$.

To control the spherical integral $s(r)$, we
argue that there must be some radius of the same
order such that it is bounded by the
tail $\int_{B^\comp} e^{-g}$. Indeed,
fix any $r_0 > 0$, and integrate over the annulus $\{x\in\R^d \colon r_0 \leqslant
\|x\|\leqslant 2r_0\}$ to find
$$
\int_{r_0}^{2r_0}s(r)\, \ud r = \int_{r_0 \leqslant \|\cdot\|\leqslant 2r_0}
\|\cdot\|\, e^{-g} \leqslant 2r_0 \int_{ r_0 \leqslant \|\cdot\| \leqslant 2r_0}
e^{-g} \leqslant 2r_0 \int_{\|\cdot\|\ge r_0} e^{-g}\,.
$$ It follows that there exists some $r \in [r_0, 2r_0]$, depending 
on $\nu$, such
that
$$
s(r) \leqslant 2 \int_{\|\cdot\|\ge r_0} e^{-g}\,.
$$
If we choose $r$ so that the above holds, then
\begin{align*}
     \KL{\nu}{\mu_t}
     &\leqslant \frac{\cpl t}{2} \FI{\nu}{\mu_t} + \cpl L_0 \int_{B^\comp} e^{-g} + \cpl L_1 \int\|\nabla f\|^2\,e^{-g} \\
     &\qquad{}+ O\bigl((\cpl \lambda_{\max} +\cpl \rho(r))\, s(r) + \cpl d\rho(r)\bigr) + \msf E_0(r) + o(1) \\
     &\leqslant \frac{\cpl t}{2} \FI{\nu}{\mu_t} + O\bigl(\cpl\,(L_0 + \lambda_{\max} + \rho(2r_0))\bigr) \int_{\|\cdot\|\ge r_0} e^{-g} \\
     &\qquad{} + \cpl L_1 \int\|\nabla f\|^2\,e^{-g} + {\underbrace{O(\cpl d\,\rho(2r_0))}_{\eqqcolon \msf E_1}} + {\underbrace{\msf E_0(2r_0)}_{\eqqcolon \msf E_0}} + o(1)\,,
\end{align*}
where in the last step we used $r \in [r_0, 2r_0]$ and the fact that $\msf E_0(r)$ is monotonic in $r$.

{\bf Step 4: Controlling the large-scale terms.}
We control the tail term
by observing that by the quadratic growth inequality~\eqref{eqn:qg},
we have for all $x$ such that $\|x\| \geqslant r_0$,
$$
1 \leqslant \frac{2\cpl}{r_0^2}\,f(x)
\le \frac{\cpl^2}{r_0^2}\, \|\nabla f(x)\|^2\,.
$$
Hence,
$$
\int_{\|\cdot\|\ge r_0} e^{-g} \leqslant
\frac{\cpl^2}{r_0^2} \int_{\|\cdot\|\ge r_0} \|\nabla f\|^2\,e^{-g}
\leqslant \frac{\cpl^2}{r_0^2} \int \|\nabla f\|^2\,e^{-g}\,.
$$
Next, we apply an integration by parts argument, similar to that used in~\cite[Lemma 20]{Che+24LMC}.
Namely,
\begin{align*}
\frac{1}{t^2} \int \|\nabla f\|^2\,e^{-g}
&= \FI{\nu}{\mu_t} - \int \|\nabla g\|^2\,e^{-g} + \frac{2}{t}\int
\langle \nabla f, \nabla g \rangle\,e^{-g} \\
&\leqslant \FI{\nu}{\mu_t}
+ \frac{2}{t}\int
\langle \nabla f, \nabla g \rangle\,e^{-g} = \FI{\nu}{\mu_t}
+ \frac2t \int \Delta f\,e^{-g}\,.
\end{align*}
Applying our assumption on $\Delta f$, we obtain
$$
\frac2t \int \Delta f \,e^{-g} \leqslant \frac{2L_0}{t} + \frac{2L_1}{t} \int\|\nabla f\|^2\, e^{-g}\,.
$$ So long as $t < \frac{1}{2L_1}$, the above can be arranged
to yield
$$
\frac{1}{t^2} \int \|\nabla f\|^2\,e^{-g} \leqslant \frac{1}{1-2t/L_1}\,\bigl(\FI{\nu}{\mu_t} + \frac{2L_0}{t}\bigr)\,.
$$
Thus, if we set
\begin{align*}
    \msf E_2
    \deq \cpl L_1 + \frac{\cpl^3}{r_0^2}\,\bigl(L_0 + \lambda_{\max} + \rho(2r_0)\bigr)\,,
\end{align*}
then
\begin{align*}
     \KL{\nu}{\mu_t}
     &\leqslant \frac{\cpl t}{2} \FI{\nu}{\mu_t} + O(\msf E_2) \int\|\nabla f\|^2\,e^{-g} + \msf E_0 + \msf E_1 + o(1) \\
     &\leqslant \Bigl(\frac{\cpl t}{2} + \frac{O(\msf E_2 t^2)}{1-2t/L_1}\Bigr)\FI{\nu}{\mu_t} + \msf E_0 + \msf E_1 + O\Bigl(\frac{\msf E_2 L_0 t}{1-2t/L_1}\Bigr) + o(1)\,.
\end{align*}

{\bf Step 5: Conclusion.}
We now recall that
\begin{align*}
    \msf E_0 = O\Bigl( \frac{r_0^2 \,\rho(2r_0) + \cpl r_0^2\,\rho(2r_0)^2}{t}\Bigr)\,, \qquad \msf E_1 = O(\cpl d\,\rho(2r_0))\,.
\end{align*}
We choose $r_0 = A\sqrt t$ for some $A > 0$, so that for small $t$,
\begin{align*}
    \msf E_0 = O\bigl(A^2\,\rho(2A\sqrt t) + A^2\cpl \,\rho(2A\sqrt t)^2\bigr) = o_A(1)\,,
    \qquad \msf E_1 = O\bigl(\cpl d\,\rho(2A\sqrt t)\bigr) = o_A(1)\,,
\end{align*}
and
\begin{align*}
    \frac{\msf E_2 L_0 t}{1-2t/L_1}
    &= O\Bigl(\cpl L_0 L_1 t + \frac{\cpl^3 L_0}{A^2}\,\bigl(L_0 + \lambda_{\max} + \rho(2r_0)\bigr)\Bigr)
    = O\bigl(\frac{1}{A^2}\bigr)\,, \\
    \frac{\msf E_2 t^2}{1-2t/L_1}
    &= O\Bigl(\cpl L_1 t^2 + \frac{\cpl^3 t}{A^2}\,\bigl(L_0 + \lambda_{\max} + \rho(2r_0)\bigr)\Bigr)
    = O\bigl( t^2 + \frac{t}{A^2}\bigr)\,.
\end{align*}
From Lemma~\ref{lem:improved_asymptotic_poincaré_under_PL}, $\cp(\mu_t) = O(t)$. 
Using the
improved tightening result~\cite[Prop.\ 5]{wang2024uniform}, included here as Lemma~\ref{lem:improved_tightening} for the reader's convenience, we obtain
\begin{align*}
    \cls(\mu_t)
    &\le \cpl t + O\bigl(t^2 + \frac{t}{A^2}\bigr) + \Bigl( o_A(1) + O\bigl(\frac{1}{A^2}\bigr)\Bigr)\,\cp(\mu_t)\,,
\end{align*}
or
\begin{align*}
    \limsup_{t\to 0^+} \frac{\cls(\mu_t)}{t}
    &\le \cpl + O\bigl(\frac{1}{A^2}\bigr)\,.
\end{align*}
Finally, letting $A \to \infty$ yields the result.
\end{proof}

\begin{remark}[Quantitative estimate]\label{rmk:quantiative_log_sobolev}
    Assume now that $\nabla^2 f \preceq \beta I$ and that $\nabla^3 f$ is $\gamma$-Lipschitz in the operator norm.
    Keeping the leading order terms in $t$,
    \begin{align*}
        \msf E_0
        &= O(\gamma A^3 \sqrt t)\,, \qquad \msf E_1 = O(\cpl \gamma d A\sqrt t)\,, \qquad \msf E_2 = O\Bigl(\frac{\beta \cpl^3 d}{A^2 t}\Bigr)\,.
    \end{align*}
    To balance the terms, we will take $A$ of order $t^{-1/10}$, so that $\msf E_1$ and the $o(1)$ term---quantified in Remark~\ref{rmk:quantitative_normalizing}---are both negligible.
    Specifically, by applying Remark~\ref{rmk:poincare_quantitative} and taking $A \asymp \beta^{2/5} \cpl^{3/5} d^{2/5}/(\gamma^{1/5} t^{1/10})$, up to leading order in $t$,
    \begin{align*}
        \cls(\mu_t)
        \le \cpl t + O\bigl((\beta \cpl^{7/3} \gamma^{1/3} d t)^{6/5}\bigr)\,.
    \end{align*}
\end{remark}

To conclude this section, we establish a weaker, but simple non-asymptotic defective log-Sobolev inequality.
In particular, the following proposition does not require $t\to 0^+$, so we state it as a result for $\mu \propto e^{-f}$ itself.
In fact, the argument below is a precursor for the more delicate proof of Theorem~\ref{thm:upper_bound} and could be of interest.

\begin{prop}[Non-asymptotic defective log-Sobolev inequality]
    Assume that $f \in C^2(\R^d)$ admits a unique minimizer, that $\cpl = \cpl(f) < \infty$, and that $\Delta f \le L_0$.
    Then, $\mu$ satisfies a $(\cpl, \cpl L_0 - d)$-defective log-Sobolev inequality.
\end{prop}
\begin{proof}
    We begin with the KL divergence decomposition as in the proof of Theorem~\ref{thm:upper_bound}, except that instead of comparing to $\mc N(0,\Sigma_t)$, we compare to $\mc N(0, \cpl I)$.
    As usual, we assume without loss of generality that the minimizer is $0$ and that $f(0) = 0$.
    It yields
    \begin{align*}
        \KL{\nu}{\mu}
        &\le \frac{\cpl}{2} \int \|\nabla g\|^2\,e^{-g} + \int f e^{-g} - d + \log \frac{Z}{(2\pi \cpl)^{d/2}}\,.
    \end{align*}
    The last term is non-positive because by the quadratic growth inequality~\eqref{eqn:qg},
    \begin{align*}
        Z
        &= \int e^{-f}
        \le \int \exp\bigl(-\frac{1}{2\cpl}\,\|\cdot\|^2\bigr)
        \le (2\pi \cpl)^{d/2}\,.
    \end{align*}
    We can rewrite this as
    \begin{align*}
        \KL{\nu}{\mu}
        &\le \frac{\cpl}{2} \FI{\nu}{\mu} + \cpl \int \bigl\{\langle \nabla f,\nabla g\rangle - \frac{1}{2}\,\|\nabla f\|^2\bigr\}\,e^{-g} + \int f e^{-g} - d \\
        &\le \frac{\cpl}{2} \FI{\nu}{\mu} + \cpl \int \langle \nabla f,\nabla g\rangle \,e^{-g} - d\,,
    \end{align*}
    where we applied the PL inequality.
    Integrating by parts,
    \begin{align*}
        \int \langle \nabla f,\nabla g\rangle\,e^{-g}
        &= \int \Delta f\,e^{-g}
        \le L_0\,.
    \end{align*}
    This concludes the proof.
\end{proof}

\appendix

\section{Estimate for the normalizing constant}

In this section, we establish the following estimate for the normalizing constant.

\begin{lemma}[Normalizing constant control]\label{lem:normalizing_constant_control}
    Assume that $f \in C^2(\R^d)$ has a unique minimizer $x_\star$ and satisfies a PL inequality.
    For all $t > 0$, let $\mu_t$ be a probability measure with density $\mu_t = \frac{1}{Z_t}\,e^{-f/t}$, $f(x_\star) = 0$,
    and define $\Sigma_t \deq t\,[\nabla^2 f(x_\star)]^{-1}$.
    Then
    $$
    \log \frac{Z_t}{\sqrt{\det(2\pi \Sigma_t)}} =
    o(1)\,.
    $$
\end{lemma}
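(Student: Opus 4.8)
The plan is to prove the slightly stronger statement that the ratio $Z_t / \sqrt{\det(2\pi\Sigma_t)}$ converges to $1$; taking logarithms then gives the claim. This is a Laplace-type asymptotic, and the argument mirrors the one already carried out for the quantity $a_t$ in the proof of Theorem~\ref{thm:main_poincare}.

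First I would reduce to a rescaled integral. Assuming without loss of generality that $x_\star = 0$ (so $f(0) = 0$ and $\nabla f(0) = 0$), the change of variables $x = \sqrt t\,z$ gives
\[
\frac{Z_t}{\sqrt{\det(2\pi\Sigma_t)}} = \frac{\int e^{-f(\sqrt t\,z)/t}\,\ud z}{\sqrt{\det(2\pi\,[\nabla^2 f(0)]^{-1})}}\,,
\]
since the factors of $t^{d/2}$ from $Z_t$ and from $\det \Sigma_t$ cancel. Note that $\nabla^2 f(0)$ is positive definite by Proposition~\ref{prop:PL_implies_Hessian_control}, so the denominator is a finite positive number, equal to $\int \exp(-\tfrac12\,\langle z,\nabla^2 f(0)\,z\rangle)\,\ud z$. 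It therefore suffices to show that $\int e^{-f(\sqrt t\,z)/t}\,\ud z \to \int \exp(-\tfrac12\,\langle z,\nabla^2 f(0)\,z\rangle)\,\ud z$ as $t\to 0^+$.

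Next I would invoke dominated convergence. For the pointwise limit, since $f\in C^2$ with $f(0)=0$ and $\nabla f(0) = 0$, Taylor's theorem (using continuity of $\nabla^2 f$ at $0$) gives $f(\sqrt t\,z)/t \to \tfrac12\,\langle z,\nabla^2 f(0)\,z\rangle$ for each fixed $z$, hence $e^{-f(\sqrt t\,z)/t}\to \exp(-\tfrac12\,\langle z,\nabla^2 f(0)\,z\rangle)$ pointwise. For the dominating function, the quadratic growth inequality~\eqref{eqn:qg} gives $f(x)\ge \|x\|^2/(2\cpl)$, so $f(\sqrt t\,z)/t \ge \|z\|^2/(2\cpl)$ uniformly in $t$, whence $e^{-f(\sqrt t\,z)/t}\le e^{-\|z\|^2/(2\cpl)}$, which is integrable. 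Dominated convergence then yields the desired convergence of integrals, so the ratio tends to $1$ and its logarithm to $0$.

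The only real subtlety is that the dominating bound must be genuinely uniform in $t$, and this is precisely where the PL hypothesis is used: without quadratic growth, $e^{-f/t}$ need not even be integrable, and the normalization $Z_t$ would be ill-defined. For the quantitative refinement referenced in Remark~\ref{rmk:quantitative_normalizing}, rather than dominated convergence I would split the rescaled integral at radius $\asymp \sqrt t$ times a slowly growing factor: on the inner region bound $|f(\sqrt t\,z)/t - \tfrac12\,\langle z,\nabla^2 f(0)\,z\rangle|$ explicitly using the modulus of continuity of $\nabla^2 f$, and on the outer region bound the contribution of both $e^{-f(\sqrt t\,z)/t}$ and the limiting Gaussian using quadratic growth; combining the two estimates produces an explicit $o(1)$ rate.
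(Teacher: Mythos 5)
Your proof is correct, and it takes a genuinely different route from the paper's. The paper splits the integral $Z_t$ into a small ball $B_r$ with $r = (\sqrt d + A)\sqrt{\cpl t}$ and its complement, bounds the inner contribution via a Taylor expansion with the Hessian modulus of continuity and the outer contribution via quadratic growth plus Gaussian concentration, and then sends $t\to 0^+$ followed by $A\to\infty$. Your argument instead rescales by $x = \sqrt t\,z$ and applies dominated convergence, with the pointwise limit coming from the $C^2$ Taylor expansion at $0$ and the dominating function $e^{-\|z\|^2/(2\cpl)}$ coming from quadratic growth. Your route is shorter and more transparent, and it also has a small structural advantage: as written, the paper's proof only establishes the \emph{upper} bound $Z_t \le \sqrt{\det(2\pi\Sigma_t)}\,(1 + o(1))$ (which is all that is actually used in the proof of Theorem~\ref{thm:upper_bound}, where the ratio appears with a plus sign), whereas the $o(1)$ claim in the lemma statement is two-sided, and the matching lower bound would still need, e.g., a Fatou argument; your dominated-convergence proof delivers both sides at once. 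The trade-off is that the paper's explicit split is what makes the quantitative rate in Remark~\ref{rmk:quantitative_normalizing} fall out directly, and you correctly note at the end that recovering such a rate would require replacing dominated convergence with exactly that kind of radius split.
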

\begin{proof}
    Without loss of generality, we can assume that $x_\star = 0$.
Let $\cpl = \cpl(f)$ and $r > \sqrt{\cpl dt}$.
We split into two regions: $B_r \deq B(0,r)$ and $B_r^\comp$.
For the latter, we note that by the quadratic growth inequality~\eqref{eqn:qg}
and Gaussian concentration,
\begin{align*}
\int_{B_r^\comp} e^{-f/t} &\leqslant
\int_{B_r^\comp} \exp \Bigl(-\frac{1}{2\cpl t}\,\|\cdot\|^2\Bigr)\\
&\leqslant (2\pi \cpl t)^{d/2} \exp \Bigl(-\frac{(r - \sqrt{\cpl dt})^2}{2t\cpl}\Bigr)\,.
\end{align*}
Let us take $r = (\sqrt{d} + A)\sqrt{\cpl t}$ for some $A > 0$.
Then, the above becomes
\begin{align*}
    \int_{B_r^\comp} e^{-f/t}
    &\le (2\pi \cpl t)^{d/2} \exp\bigl(-\frac{A^2}{2}\bigr)\,.
\end{align*}

For the remaining region, let
\begin{align*}
    \rho(r) \deq \sup_{x\in B_r}{\norm{\nabla^2 f(x) - \nabla^2 f(0)}_{\rm op}}
\end{align*}
denote the modulus of continuity of the Hessian near the minimizer, and note that since $f$ is of class $C^2$, we have $\rho(r) \to 0$ as $r\to 0^+$.
Taylor expansion shows that for all $x\in B_r$,
\begin{align*}
    \Bigl\lvert f(x) -\frac{1}{2}\,\langle x, \nabla^2 f(0) \,x\rangle \Bigr\rvert 
    &= \Bigl\lvert \int_0^1 (1-t)\,\langle x, [\nabla^2 f(tx)-\nabla^2 f(0)]\,x\rangle \, \ud t\Bigr\rvert
    \le \frac{\rho(r)\,r^2}{2}\,.
\end{align*}
Hence, for $\Sigma_t =t\,[\nabla^2 f(0)]^{-1}$,
\begin{align*}
    \int_{B_r} e^{-f/t}
    &\leqslant \int_{B_r} \exp\Bigl( -\frac12\, \|x\|^2_{\Sigma_t^{-1}} + \frac{\rho(r) \,r^2}{2t}\Bigr) \\
    &\leqslant \sqrt{\det (2\pi \Sigma_t)}
    \exp\bigl(\cpl\,(d+A^2)\,\rho((\sqrt d + A)\sqrt{\cpl t})\bigr)\,.
\end{align*}
Putting these results together, we obtain
\begin{align*}
    Z_t
    &\le \sqrt{\det(2\pi \Sigma_t)}\\
    &\qquad{}\times \Bigl[\exp\bigl(\cpl\,(d+A^2)\,\rho((\sqrt d + A)\sqrt{\cpl t})\bigr) + \cpl^{d/2}\sqrt{\det(\nabla^2 f(0))} \exp\bigl(-\frac{A^2}{2}\bigr)\Bigr]\,.
\end{align*}
The result follows by letting $t\to 0^+$ first, followed by $A \to\infty$.
\end{proof}

\begin{remark}\label{rmk:quantitative_normalizing}
    To extract a quantitative estimate, let us assume that $\nabla^2 f \preceq \beta I$ and that $\nabla^3 f$ is $\gamma$-Lipschitz in the operator norm, so that $\rho(r) \le \gamma r$.
    Assume that $\cpl^{3/2}\gamma \ge 1$ for simplicity.
    If we choose $A \asymp \sqrt{d\log(\beta \cpl)} + \sqrt{\log(1/t)}$, it leads to the estimate
    \begin{align*}
        \log \frac{Z_t}{\sqrt{\det(2\pi\Sigma_t)}}
        = O\Bigl(\cpl^{3/2}\gamma\,\bigl(d \log \frac{\beta\cpl}{t}\bigr)^{3/2}\, t^{1/2}\Bigr)\,.
    \end{align*}
\end{remark}

\bibliographystyle{alpha}
\bibliography{annot}
\end{document}